\newtheorem{theorem}{Theorem}[section]
\newtheorem{lemma}[theorem]{Lemma}
\newtheorem{proposition}[theorem]{Proposition}
\newtheorem{remark}[theorem]{Remark}
\newtheorem{definition}[theorem]{Definition}
 \newcommand{\R}{\mathbb R}
\newcommand{\PP}{\mathbb P}
\newcommand{\normal}{\mathbf{N}}
 \newcommand{\KK} {\mathcal{K}}
\newcommand{\bbf}{{\mathbf {f}}}
\newcommand{\bn}{{\mathbf {n}}}
\newcommand{\Bf}{\mathbf{f}}
\newcommand{\bu}{u}%\mathbf{u}}
\newcommand{\bv}{v}%\mathbf{v}}
\newcommand{\bF}{\mathbf{f}}
\newcommand{\bbF}{\mathbf{\mathcal{F}}}
\newcommand{\bbu}{u}%\mathbf{u}}
\newcommand{\bbg}{\mathbf{g}}
\newcommand{\hbbg}{\hat{\mathbf{g}}}
\newcommand{\hbbf}{\hat{\mathbf{f}}}
\newcommand{\bx}{\mathbf{x}}
\newcommand{\by}{\mathbf{y}}
\begin{document}
\title{Some remarks about conservation %and entropy stability 
for residual distribution schemes}
\author{R. Abgrall.}
\date{\today}
\maketitle

\begin{abstract}
We are interested in the discretisation of the  steady version of hyperbolic problems. We first show that all the known schemes (up to our knowledge) can be rephrased in a common framework. Using this framework, we then show they  flux formulation, with an explicit construction of the flux, and thus are locally conservative. This is well known for the finite volume schemes or the discontinuous Galerkin ones, much less known for the continuous finite element methods. We also  show that Tadmor's entropy stability formulation can naturally be rephrased in this framework as an additional conservation relation discretisation, and using this, we show some connections with the recent papers \cite{ShuEntropy,Mishra,Gassner,Zingg}.
This contribution  is an enhanced version of \cite{icm}.
\end{abstract}
%\tableofcontents
\section{Introduction}
In this paper, we are interested in the approximation of non-linear hyperbolic problems.
 To make things more precise, our target are the  Euler equations in the compressible regime, other examples are the MHD equations. The case of parabolic problems in which the elliptic terms play an important role only in some area of the computational domain, such as the Navier-Stokes equations in the compressible regime, or the resistive MHD equations, can be dealt with in a similar way. 
In a series of papers \cite{Abgrall99,energie,AbgrallRoe,abgrall:dgrds,abgrall:shu,abgrallLarat,Mario,abgralldeSantisSISC,AbgralldeSantisNS,Abgrall2017}, following the pioneering work of Roe and Deconinck \cite{struijs}, we have  developed, with collaborators\footnote{in particular M. Ricchiuto, from INRIA Bordeaux Sud-Ouest},  a class of schemes that borrow some features from the finite element methods, and others, such as a  local maximum principle and a  non-linear stabilisation from the finite difference/finite volume methods. Though the methods have been developed with some rigour,  there is a lack of a more theoretical analysis, and also to explain in a clearer way the connections with more familiar methods such as the continuous finite elements methods or the discontinuous Galerkin ones.

The ambition of this paper is to provide this link through a discussion about conservation and entropy stability. In most of the paper, we consider  steady problems in the scalar case. The extension  to the system case is immediate. Examples of schemes are given in the paper and the appendix. %We indicate, in the end of this paper,  how to extend all this to unsteady problems. 
Their extensions to the system case can be found in \cite{abgrallLarat} for the pure hyperbolic case and in \cite{abgralldeSantisSISC,AbgralldeSantisNS} for the Navier Stokes equations. 

The model problem is 
\begin{subequations}\label{eq1}
\begin{equation}
\label{eq1:1}
\text{ div }\bbf(\bu)=0\qquad \text{in }\Omega
\end{equation}
subjected to
\begin{equation}\label{eq1:2}
\min( \nabla_\bu \bbf(\bu)\cdot \bn(\bx), 0) (\bu-\bu_b)=0 \text{ on }\partial\Omega.
\end{equation}
\end{subequations}
The domain $\Omega$ is assumed to be bounded, and regular. We assume for simplicity that its boundary is never characteristic. We also assume that it has a polygonal shape and thus any triangulation that we consider covers $\Omega$ exactly.
In \eqref{eq1:2}, $\bn(\bx)$ is the outward unit vector at $\bx\in \partial \Omega$ %(thus we assume enough regularity for $\Omega$) 
and $\bu_b$ is a  regular enough function.  The weak formulation of \eqref{eq1} is: $\bu\in  L^\infty(\Omega)$ is a weak solution of \eqref{eq1} if for any $\varphi\in C^1_0(\Omega)$, 
\begin{equation}
\label{weak:eq1}
-\int_\Omega \nabla \bv\cdot \bbf(\bu^h)\; d\bx+\int_{\partial \Omega} \bv \big ( \mathbf{\mathcal{F}}_\bn(\bu,\bu_b)-\bbf(\bu)\cdot\bn\big ) \; d\gamma=0
\end{equation}
where $\mathbf{\mathcal{F}}_\bn$ is a flux that is almost everywhere the upwind flux:
$$\mathbf{\mathcal{F}}_\bn(\bu,\bu_b)=\left \{\begin{array}{ll}
\bbf(\bu_b)\cdot\bn & \text{ if } \nabla_\bu \bbf(\bu)\cdot \bn >0\\
\bbf(\bu)\cdot\bn & \text{ else.}
\end{array}
\right .
$$

In a first part, we present the class of schemes (nicknamed as Residual Distribution Schemes or RD or RDS for short)  we are interested in, and show their link with more classical methods such as finite element ones. Then we recall  a condition that guarantees that the numerical solution will converge to a weak solution of the problem.
%, and a second one about entropy condition.
 In the third part, we show that the RD schemes are also finite volume schemes: we compute explicitly the flux. %There is not uniqueness. 
 In the fourth part, show that the now classical condition given by Tadmor in \cite{TadmorVieux,TadmorActa} in one dimension fits very naturally in our framework.
\section{Notations}
 From now on, we assume that $\Omega$ has a polyhedric boundary.  This simplification is by no mean essential. We denote by $\mathcal{E}_h$ the set of internal edges/faces of $\mathcal{T}_h$, and by $\mathcal{F}_h$ those contained in $\partial \Omega$.  $\KK$ stands either for an element $K$ or a face/edge $e\in \mathcal{E}_h\cup \mathcal{F}_h$. The boundary faces/edges are denoted by $\Gamma$.  The mesh is assumed to be shape regular, $h_K$ represents the diameter of the element $K$. Similarly, if $e\in \mathcal{E}_h\cup \mathcal{F}_h$, $h_e$ represents its diameter.

 Throughout this paper, we follow Ciarlet's definition \cite{ciarlet,ErnGuermond} of a finite element approximation: we have a set of degrees of freedom $\Sigma_K$ of linear forms acting on the set $\PP^k$ of polynomials of degree $k$ such that the linear mapping
 $$q\in \PP^k\mapsto \big (\sigma_1(q), \ldots, \sigma_{|\Sigma_K|}(q)\big )$$
 is one-to-one. The space $\PP^k$ is spanned by the basis function $\{\varphi_{\sigma}\}_{\sigma\in \Sigma_K}$  defined by
 $$\forall \sigma,\, \sigma',  \sigma(\varphi_{\sigma'})=\delta_\sigma^{\sigma'}.$$
  We have in mind either Lagrange interpolations where the degrees of freedom are associated to points in $K$, or other type of polynomials approximation such as B\'ezier polynomials where we will also do the same geometrical identification.
 Considering all the elements covering $\Omega$, the set of degrees of freedom is denoted by $\mathcal{S}$ and a generic degree of freedom  by $\sigma$. We note that for any $K$, 
 $$\forall \bx\in K, \quad \sum\limits_{\sigma\in K}\varphi_\sigma(\bx)=1.$$
 For any element $K$, $\#K$ is the number of degrees of freedom in $K$. If $\Gamma$ is a face or a boundary element, $\#\Gamma$ is also the number of degrees of freedom in $\Gamma$.

 The integer $k$ is assumed to be the same for any element.  We define 
$$\mathcal{V}^h=\bigoplus_K\{ \bv\in L^2(K), \bv_{|K}\in \PP^k\}.$$
The solution will be sought for in a  space $V^h$ that is:
\begin{itemize}
\item Either $V^h=\mathcal{V}^h$. In that case, the elements of $V^h$ can be discontinuous across internal faces/edges of $\mathcal{T}_h$. There is no conformity requirement on the mesh.
\item Or  $V^h=\mathcal{V}_h\cap C^0(\Omega)$ in which case the mesh needs to be conformal.
\end{itemize}

Throughout the text, we need to integrate functions. This is done via quadrature formula, and the symbol $\oint$ used in volume integrals
$$\oint_K v(\bx)\; d\bx$$
or boundary integrals
$$
\oint_{\partial K} v(\bx)\; d\gamma$$
means that these integrals are done via user defined numerical quadratures.

 If $e\in \mathcal{E}_h$, represents any  \emph{internal} edge, i.e. $e\subset K\cap K^+$ for two elements $K$ and $K^+$,  we define for any function $\psi$ the jump  $[\nabla \psi ]=\nabla \psi_{|K}-\nabla \psi_{| K^+}$. Here the choice of $K$ and $K^+$ is important, hence  also see relation \eqref{jump} in section \ref{sec:multiD} where these element are defined in the relevant context. Similarly, $\{\bv\}=\tfrac{1}{2}\big (\bv_{|K}+\bv_{|K^+}\big )$.
 
 If $\mathbf{x}$ and $\mathbf{y}$ are two vectors of $\R^q$, for $q$ integer, $\langle \bx,\by\rangle$ is their scalar product. In some occasions, it can also be denoted as $\bx\cdot\by$ or $\bx^T\by$.  We also use  $\bx\cdot\by$ when $\bx$ is a matrix and $\by$ a vector: it is simply the matrix-vector multiplication.

In sections \ref{flux} and \ref{sec:entropy}, we have to deal with oriented graph. Given two vertices of this graph $\sigma$ and $\sigma'$, we write $\sigma>\sigma'$ to say that $[\sigma,\sigma']$ is a direct edge.

     \section{Schemes and conservation}
     %        %%%%%%%%%%%%%%%%%
\subsection{Schemes}
We begin this section by recalling the notion of flux.
Let us  consider any common edge or face $\Gamma$ of $K^+$ and $K^-$, two elements. Let $\bn$ be the normal to $\Gamma$, see Figure \ref{fig:flux}. Depending on the context, $\bn$ is a scaled normal or $||\bn||=1$.
\begin{figure}[h]
\begin{center}
\psfrag{K+}{$K^+$}
\psfrag{K-}{$K^-$}
\psfrag{n}{$\bn$}
\includegraphics[width=0.45\textwidth]{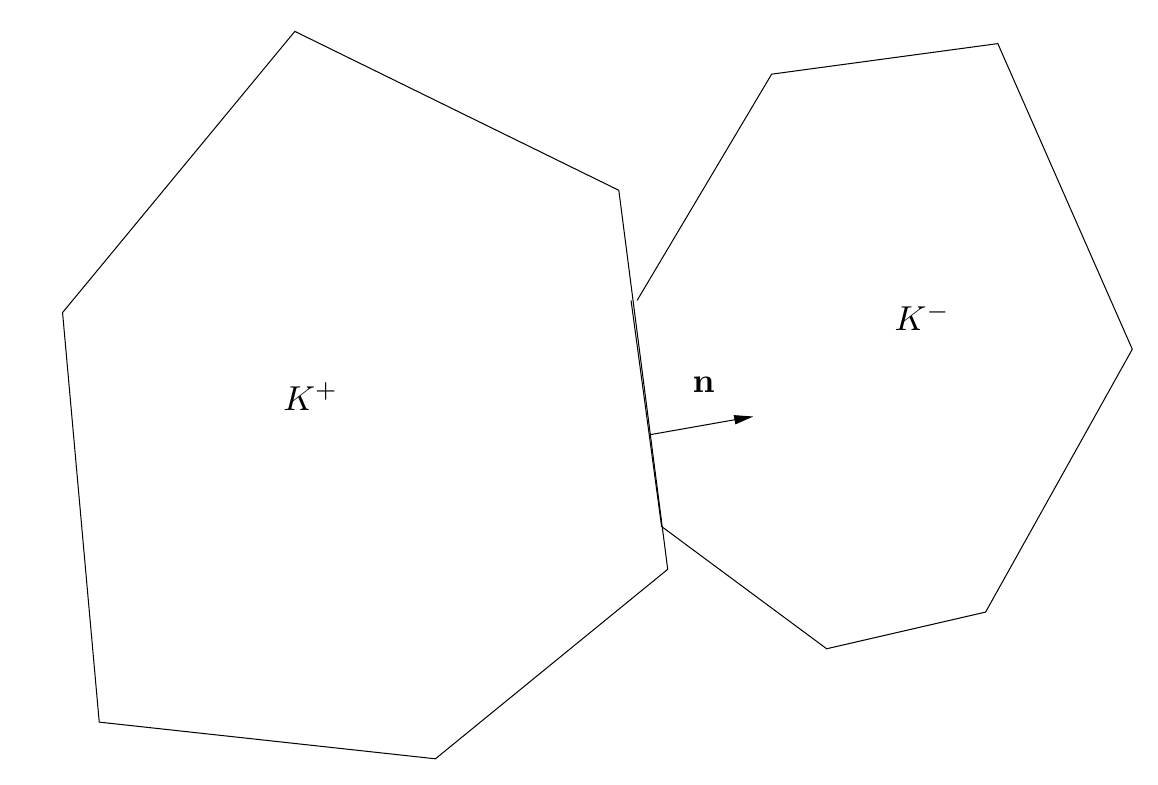}
\caption{\label{fig:flux}Geometrical setting}
\end{center}
\end{figure}
The symbols $S^\pm$ represent set of states, where $S^+$ is associated to $K^+$ and $S^-$ to $K^-$.
A flux $\hbbf_\bn(S^+, S^-)$ between $K^+$ and $K^-$ has to satisfy
\begin{subequations}\label{flux:def}
\begin{equation}\label{flux:def:1}
\hbbf_\bn(S^+, S^-)=-\hbbf_{-\bn}(S^-, S^+).
\end{equation}
and the consistency condition when the sets $S^\pm$ reduce to $\bu$
\begin{equation}\label{flux:def:2}
\hbbf_\bn(S,S)=\bbf(\bu)\cdot \bn.
\end{equation}
\end{subequations}
 For a first order finite volume scheme,
we have $S^+=\bu_{K_+}$ and $S^-=\bu_{K_-}$,  the average values of $\bu$ in $K^+$ and $K^-$. For the other schemes, for example high order schemes,  the definition is more involved.

{ In order to integrate the steady version of \eqref{eq1} on a domain $\Omega\subset \R^d$ with the boundary conditions \eqref{eq1:2}, on each element $K$ and any degree of freedom $\sigma\in \mathcal{S}$ belonging to $K$,  we define residuals $\Phi_\sigma^K(\bu^h)$. Following \cite{abgrallLarat,abgralldeSantisSISC}, they are assumed to satisfy the following conservation relations:
For any element $K$, 
\begin{equation}
\label{conservation:K}
\sum\limits_{\sigma\in K}\Phi_\sigma^K(\bu^h)=\int_{\partial K}\hbbf_\bn(\bu^h,\bu^{h,-})\; d\gamma ,
\end{equation}
where $\bu^{h,-}$ is the approximation of the solution on the other side of the local edge/face of $K$.
 Note that in the case of a conformal mesh and  with globally continuous elements, the condition reduces to
$$\sum\limits_{\sigma\in K}\Phi_\sigma^K(\bu^h)=\int_{\partial K} \bbf(\bu^h)\cdot\bn \; d\gamma .$$
Similarly, we consider residuals on the boundary elements $\Gamma$. On any such $\Gamma$, for any degree of freedom $\sigma\in \mathcal{S}\cap \Gamma$, we consider boundary residuals $\Phi_\sigma^\Gamma(u^h)$ that will satisfy the conservation relation
\begin{equation}
\label{conservation:Gamma}
\sum\limits_{\sigma\in \Gamma}\Phi_\sigma^\Gamma(\bu_h)=\int_{\Gamma}\big ( \mathcal{F}_\bn(\bu^h, \bu_b)-\bF(\bu^h)\cdot \bn\big ) \; d\gamma.
\end{equation}
Once this is done, the discretisation of \eqref{eq1} is achieved via: for any $\sigma\in \mathcal {S}$,
\begin{equation}
\label{RD:scheme}
\sum\limits_{K\subset\Omega, \sigma\in K}\Phi_\sigma^K(\bu^h)+\sum\limits_{\Gamma\subset\partial\Omega, \sigma\in \Gamma}\Phi_\sigma^\Gamma(\bu^h)=0.
\end{equation}
In \eqref{RD:scheme}, the first term represents the contribution of the internal elements. The second exists if $\sigma\in \partial \Omega$ and represents the contribution of the boundary conditions.

In fact, the formulation \eqref{RD:scheme} is very natural. Consider a variational formulation of the steady version of \eqref{eq1}:
$$\text{ find } \bu^h\in V^h \text{ such that for any } {v}^h\in V^h,  a(\bu^h,\bv^h)=0.$$
Let us show on  three examples that this variational formulation leads to \eqref{RD:scheme}. They are
\begin{itemize}
\item The SUPG \cite{Hughes1} variational formulation,  with $\bu^h, \bv^h\in V^h=\mathcal{V}^h\cap C^0(\Omega)$:
\begin{equation}
\label{SUPG:var}
\begin{split}
a(\bu^h,\bv^h)&:=-\int_\Omega \nabla \bv^h\cdot \bF(\bu^h)\; d\bx+\sum\limits_{K\subset \Omega}h_K\int_K\big [ \nabla \bF(\bu^h)\cdot\nabla \bv^h\big ] \; \tau_K\; \big [\nabla\bF(\bu^h)\cdot \nabla \bu^h\big ] d\bx\\
&\qquad +\int_{\partial \Omega} \bv^h \big (\bbF_\bn(\bu^h,\bu_b)-\bF(\bu^h)\cdot\bn\big ) \;d\gamma .
\end{split}
\end{equation}
Here $\tau_K$ is a positive parameter.
\item The Galerkin scheme with jump stabilization,  see \cite{burman} for details.  We have
\begin{equation}
\label{burman:var}
\begin{split}
a(\bu^h,\bv^h)&:=-\int_\Omega \nabla \bv^h\cdot \bF(\bu^h)\; d\bx+\sum\limits_{e \subset \Omega}\theta_e h_e^2\int_e \big [ \nabla \bv^h \big ]\cdot \big [ \nabla \bu^h\big ] \; d\gamma \\
&\qquad +\int_{\partial \Omega} \bv^h \big (\bbF_\bn(\bu^h,\bu_b)-\bF(\bu^h)\cdot\bn\big ) \; d\gamma .
\end{split}
\end{equation}
 Here,  $\bu^h, \bv^h\in V^h=\mathcal{V}^h\cap C^0(\Omega)$, and $\theta_e$ is a positive parameter.
\item The discontinuous Galerkin formulation: we look for $\bu^h, \bv^h\in V^h=\mathcal{V}^h$ such that
\begin{equation}\label{DG:var}
a(\bu^h,\bv^h):=\sum\limits_{K\subset \Omega}\bigg ( -\int_K\nabla\bv^h\cdot\bbf(\bu^h) d\bx+\int_{\partial K}\bv^h\cdot \hat{\bbf}_\bn(\bu^{h},\bu^{h,-}) \;d\gamma \bigg ).
\end{equation}
In \eqref{DG:var}, the boundary integral is a sum of integrals on the faces of $K$, and here for any face of $K$
 $\bu^{h,-}$ represents the approximation of $\bu$ on the other side of that face in the case of internal elements, and $\bu_b$ when that face is on $\partial \Omega$. % In \eqref{DG:var}, $\hbbf$ is a consistent numerical flux. 
 Note that to fully comply with \eqref{RD:scheme}, we should have defined for boundary faces $\bu^{h,-}=\bu^h$, and then \eqref{DG:var} is rewritten as 
 \begin{equation}\label{DG:var:2}
a(\bu^h,\bv^h):=\sum\limits_{K\subset \Omega}\bigg ( -\int_K\nabla\bv^h\cdot\bbf(\bu^h) d\bx+\int_{\partial K}\bv^h\, \hat{\bbf}_\bn(\bu^{h},\bu^{h,-}) \; d\gamma \bigg )+\sum\limits_{\Gamma\subset\partial\Omega}\int_{\Gamma}\bv^h\cdot\bigg ( \bbF_\bn(\bu^h,\bu_b)-\bbf(\bu^h)\cdot \bn \bigg )\; d\gamma.
\end{equation}
In \eqref{DG:var}, we have implicitly assumed $\hbbf_\bn=\bbF_\bn$ on the boundary edges.
\end{itemize}
In the SUPG, Galerkin scheme with jump stabilisation or the DG scheme, the boundary flux can be chosen different from $\bbF$. This can lead to boundary layers if these flux are not "enough" upwind, but we are not interested in these issues here.

Using the fact that the basis functions that span $V_h$ have a \emph{compact} support, then each scheme can be rewritten in the form \eqref{RD:scheme} with the following expression for the residuals:
    \begin{itemize}
    \item For the  SUPG scheme \eqref{SUPG:var}, the  residual are defined by
    \begin{equation}\label{SUPG}\Phi_\sigma^K(\bu^h)=\int_{\partial K}\varphi_\sigma \bF(\bu^h)\cdot \bn \; d\gamma -\int_K \nabla \varphi_\sigma\cdot \bF(\bu^h) \; d\bx+h_K
    \int_K \bigg (\nabla_\bu\bF(\bu^h)\cdot \nabla \varphi_\sigma \bigg )\tau_K \bigg (\nabla_\bu\bF(\bu^h)\cdot \nabla \bu^h \bigg )\;d\bx .\end{equation}
  %  with $\tau_K>0$.
    \item For the Galerkin scheme with jump stabilization \eqref{burman:var}, the residuals are defined  by:  
        \begin{equation}\label{burman}\Phi_\sigma^K(\bu^h)=\int_{\partial K}\varphi_\sigma \bF(\bu^h)\cdot \bn\; d\gamma -\int_K \nabla \varphi_\sigma\cdot \bF(\bu^h)\; d\bx +
    \sum\limits_{e \text{ faces of }K} \frac{\theta_e}{2} h_e^2 \int_{\partial K} [\nabla \bu^h]\cdot [\nabla \varphi_\sigma]\; d\gamma\end{equation}
    with $\theta_e>0$.
    Here, since the mesh is conformal, any internal edge $e$ (or face in 3D) is the intersection of the element $K$ and another element denoted by $K^+$.
    \item For the discontinuous Galerkin scheme,
    \begin{equation}\label{DG}
    \Phi_\sigma^K(\bu^h)=-\int_K\nabla\varphi_\sigma\cdot\bbf(\bu^h) d\bx+\int_{\partial K}\varphi_\sigma\cdot \hat{\bbf}_\bn(\bu^{h},\bu^{h,-}) \; d\gamma
    \end{equation}
    using the second definition of $\bu^{h,-}$.
    \item The boundary residuals are 
  \begin{equation}
  \label{boundary}
  \Phi_\sigma^\Gamma(\bu^h)=  \int_{\Gamma}\varphi_\sigma\big ( \mathcal{F}_\bn(\bu^h, \bu_b)-\bF(\bu^h)\cdot \bn\big )\; d\gamma
  \end{equation}
  \end{itemize}
All these residuals satisfy the relevant conservation relations, namely \eqref{conservation:K} or \eqref{conservation:Gamma}, depending if we are dealing with element residuals or boundary residuals.

\bigskip

For now, we are just rephrasing classical finite element schemes into a purely numerical framework. However, considering  the pure numerical point of view and forgetting the variational framework, we can go further and define schemes that have no clear variational formulation. 
These are  the limited  Residual Distributive Schemes, see \cite{abgrallLarat,abgralldeSantisSISC}, namely
    \begin{equation}
    \label{schema RDS}\Phi_\sigma^K(\bu^h)=\beta_\sigma \int_{\partial K}\bF(\bu^h)\cdot \bn\; d\gamma
    \end{equation}
    or 
    \begin{equation}
    \label{schema RDS SUPG}\Phi_\sigma^K(\bu^h)=\beta_\sigma \int_{\partial K}\bF(\bu^h)\cdot \bn\; d\gamma+\theta_Kh_K
    \int_K \bigg (\nabla_\bu\bF(\bu^h)\cdot \nabla \varphi_\sigma \bigg )\tau_K \bigg (\nabla_\bu\bF(\bu^h)\cdot \nabla \bu^h \bigg )\;d\bx, \qquad \theta_K\geq 0
    \end{equation}
or
\begin{equation}
\label{schema RDS jump}\Phi_\sigma^K(\bu^h)=\beta_\sigma \int_{\partial K}\bF(\bu^h)\cdot \bn\; d\gamma+
    \theta_e \;h_e^2 \int_{\partial K} [\nabla \bu^h]\cdot [\nabla \varphi_\sigma]\; d\gamma \qquad \theta_e\geq 0\end{equation}
   where the parameters $\beta_\sigma$ are defined to guarantee conservation,
   $$\sum\limits_{\sigma\in K} \beta_\sigma=1$$
   and such that \eqref{schema RDS SUPG} without the streamline term and \eqref{schema RDS jump} without the jump term satisfy a discrete maximum principle. The streamline term and jump term are introduced because one can easily see that spurious modes may exist, but their role is very different compared to \eqref{SUPG} and \eqref{burman} where they are introduced to stabilize the Galerkin scheme: if formally the maximum principle is violated, experimentally the violation is extremely small if existent at all. See \cite{energie,abgrallLarat} for more details. 
   
   A similar construction can be done starting from a discontinuous Galerkin scheme, see \cite{abgrall:shu,abgrall:dgrds}. 
   A second order version is described in appendix \ref{DG_RDS}.
   
   The non-linear stability is provided by the coefficient $\beta_\sigma$ which is a non-linear function of $\bu^h$.  Possible values of $\beta_\sigma$ are described in remark \ref{beta} bellow.
   \begin{remark}\label{beta}
   The coefficients $\beta_\sigma$ introduced in the relations \eqref{schema RDS SUPG} and \eqref{schema RDS jump} are defined by:
\begin{equation}\label{eqbeta}
\beta_\sigma=\dfrac{\max(0,\frac{\Phi_\sigma}{\Phi})}
{
\sum\limits_{\sigma'\in K} \max(0,\frac{\Phi_{\sigma'}}{\Phi})}.
\end{equation}
 These coefficients are always defined and   garantee a local maximum principle for \eqref{schema RDS SUPG} and \eqref{schema RDS jump}: this is again a consequence of the conservation properties,  see e.g. \cite{abgrallLarat}. Note this is true for any order of interpolation.
 \end{remark}
   
  }

\subsection{Conservation}\label{sec:conservation}
From \eqref{RD:scheme}, using the conservation relations \eqref{conservation:Gamma} and \eqref{conservation:K}, we obtain for any $\bv^h\in V^h$,
$$\bv_h=\sum_{\sigma \in \mathcal{S}} \bv_\sigma \varphi_\sigma,$$ the following relation:
\begin{equation}\label{algebre2}
\begin{split}
0&=  -\int_\Omega \nabla \bv_h \cdot \bbf(\bu^h) \; d\bx  +\int_{\partial \Omega}\bv^h \big (\hat{\bbf}_\bn(\bu^h,\bu_b)-\bbf(\bu^h)\cdot \bn\big ) \; d\gamma\\
&\qquad +\sum\limits_{e\in \mathcal{E}_h} \int_e[\bv^h]\hbbf_\bn(\bu^h,\bu^{h,-})\; d\gamma+\sum\limits_{K\subset \Omega}\frac{1}{\#K}\bigg ( \sum\limits_{\sigma,\sigma'\in K} (\bv_\sigma-\bv_{\sigma'})\bigg ( \Phi_\sigma^K(\bu^h)-\Phi_\sigma^{K, Gal}(\bu^h) \bigg ) \bigg )\\
&\qquad \qquad
+\sum\limits_{\Gamma\subset \partial \Omega} \frac{1}{\#\Gamma}\bigg ( \sum\limits_{\sigma,\sigma'\in \Gamma} (\bv_\sigma-\bv_{\sigma'})(\Phi_\sigma^\Gamma \big (\bu^h,\bu_b)-\Phi_\sigma^{Gal,\Gamma}(\bu^h,\bu_b)\big )\bigg )
\end{split}
%-\int_\Omega \nabla \bv^h\cdot \bbf(\bu^h)d\bx+\int_{\partial \Omega} \bv^h\cdot \hat{\bbf}_\bn(\bu^h, \bu_b) d\gamma &+
%\sum\limits_{K}\sum_{\sigma, \sigma'\in K} \big (\bv_\sigma-\bv_{\sigma'}) \cdot \big(\Phi_{\sigma}^K(\bu^h)-\Phi_\sigma^{K, Gal}\big )\\
%&\qquad+
%\sum_{\Gamma\subset\partial \Omega} \sum_{\sigma, \sigma'\in \Gamma} \big (\bv_\sigma-\bv_{\sigma'}) \cdot \big(\Phi_{\sigma}^\Gamma(\bu^h)-\Phi_\sigma^{\Gamma, Gal}\big )=0
%\end{split}
\end{equation}
where
$$\Phi_\sigma^{K, Gal}(\bu^h)=-\int_K\nabla\varphi_\sigma\cdot \bbf(\bu^h) \; d\bx +\int_{\partial K} \varphi_\sigma \hbbf_\bn(\bu^h,\bu^{h,-}) \; d\gamma,
\qquad \Phi_\sigma^{\Gamma, Gal}(\bu^h,\bu_b)=\int_\Gamma \varphi_\sigma \big (\hat{\bbf}_\bn(\bu^h,\bu_b)-\bbf(\bu^h)\cdot \bn\big ) \; d\gamma.$$
\begin{proof}
We start from \eqref{RD:scheme} which is multiplied by $\bv_\sigma$, and these relations are added for each $\sigma\in \mathcal{S}$. We get:
$$
0=\sum\limits_{\sigma\in \mathcal{S}}\bv_\sigma\bigg ( \sum\limits_{K\subset \Omega, \sigma\in K} \Phi_\sigma^K(\bu^h)+\sum\limits_{\Gamma\subset\partial \Omega, \sigma\in \Gamma}\Phi_\sigma^\Gamma(\bu^h,\bu_b) \bigg ).
$$
Permuting the sums on $\sigma$ and $K$, then on $\sigma$ and $\Gamma$, we get:
$$
0=\sum\limits_{K\subset \Omega} \bigg ( \sum\limits_{\sigma\in K} \bv_\sigma\Phi_\sigma^K(\bu^h)\bigg ) + \sum\limits_{\Gamma\subset\partial \Omega}\bigg ( \sum\limits_{\sigma\in \Gamma} \bv_\sigma\Phi_\sigma^\Gamma(\bu^h,\bu_b)\bigg ) .$$
We look at the first term, the second is done similarly.
We have, introducing $\Phi_\sigma^{K, Gal}$ and $\#K$ the number of degrees of freedom in $K$,
\begin{equation*}
\begin{split}
\sum\limits_{\sigma\in K} \bv_\sigma\Phi_\sigma^K(\bu^h)&=\sum\limits_{\sigma\in K} \bv_\sigma \Phi_\sigma^{K, Gal}(\bu^h) +\sum\limits_{\sigma\in K} \bv_\sigma\bigg ( \Phi_\sigma^K(\bu^h)-\Phi_\sigma^{K, Gal}(\bu^h) \bigg )\\
&= -\int_K \nabla \bv_h \cdot \bbf(\bu^h) \; d\bx +\int_{\partial K} \bv^h \hbbf_\bn(\bu^h, \bu^{h,-})\; d\gamma+ \sum\limits_{\sigma\in K} \bv_\sigma\bigg ( \Phi_\sigma^K(\bu^h)-\Phi_\sigma^{K, Gal}(\bu^h) \bigg )\\
&=-\int_K \nabla \bv_h \cdot \bbf(\bu^h) \; d\bx +\int_{\partial K} \bv^h \hbbf_\bn(\bu^h, \bu^{h,-})\; d\gamma+ \frac{1}{\#K}\sum\limits_{\sigma,\sigma'\in K} (\bv_\sigma-\bv_{\sigma'})\bigg ( \Phi_\sigma^K(\bu^h)-\Phi_\sigma^{K, Gal}(\bu^h) \bigg )
\end{split}
\end{equation*}
because $$\sum\limits_{\sigma\in K} \big ( \Phi_\sigma^K(\bu^h)-\Phi_\sigma^{K, Gal}(\bu^h) \big )=0.$$

Similarly, we have
\begin{equation*}
\begin{split}
\sum\limits_{\sigma\in \Gamma } \bv_\sigma\Phi_\sigma^\Gamma(\bu^h)&=\int_{\Gamma}\bv^h \big (\hat{\bbf}_\bn(\bu^h,\bu_b)-\bbf(\bu^h)\cdot \bn\big ) \; d\gamma+\sum\frac{1}{\#\Gamma}\sum\limits_{\sigma,\sigma'\in \Gamma} (\bv_\sigma-\bv_{\sigma'})(\Phi_\sigma^\Gamma \big (\bu^h,\bu_b)-\Phi_\sigma^{Gal,\Gamma}(\bu^h,\bu_b)\big )
\end{split}
\end{equation*}
Adding all the relations, we get:
\begin{equation*}
\begin{split}
0&= \sum\limits_{K\subset \Omega} \bigg ( -\int_K \nabla \bv_h \cdot \bbf(\bu^h) \; d\bx +\int_{\partial K} \bv^h \hbbf_\bn(\bu^h, \bu^{h,-})\; d\gamma\bigg ) +\sum\limits_{\Gamma\subset \partial \Omega}\int_{\Gamma}\bv^h \big (\hat{\bbf}_\bn(\bu^h,\bu_b)-\bbf(\bu^h)\cdot \bn\big ) \; d\gamma\\
& \qquad  +\sum\limits_{K\subset \Omega}\frac{1}{\#K}\bigg ( \sum\limits_{\sigma,\sigma'\in K} (\bv_\sigma-\bv_{\sigma'})\bigg ( \Phi_\sigma^K(\bu^h)-\Phi_\sigma^{K, Gal}(\bu^h) \bigg ) \bigg )\\
&\qquad \qquad
+\sum\limits_{\Gamma\subset \partial \Omega} \frac{1}{\#\Gamma}\bigg ( \sum\limits_{\sigma,\sigma'\in \Gamma} (\bv_\sigma-\bv_{\sigma'})(\Phi_\sigma^\Gamma \big (\bu^h,\bu_b)-\Phi_\sigma^{Gal,\Gamma}(\bu^h,\bu_b)\big )\bigg )
\end{split}
\end{equation*}
i.e. after having defined $[\bv^h]= \bv^h-\bv^{h,-}$ and chosen one orientation of the internal edges $e\in \mathcal{E}_h$,  we get \eqref{algebre2}.
\end{proof}

The relation \eqref{algebre2} is instrumental in proving the following results.
The first one is proved in  \cite{AbgrallRoe}, and is a generalisation of the classical Lax-Wendroff theorem.
\begin{theorem}\label{th:LW}
Assume the family of meshes $\mathcal{T}=(\mathcal{T}_h)$
%_{h\in \HH}$ 
is shape regular. We assume that the residuals $\{\Phi_\sigma^{\mathcal{K}}\}_{\sigma\in \KK}$,
 for $\mathcal{K}$ an element or a boundary element of $\mathcal{T}_h$, satisfy: \begin{itemize}
\item For any $M\in \R^+$, there exists a constant $C$ which depends only on the family of meshes $\mathcal{T}_h$ and $M$ such that
 for any $\bu^h\in V^h$ with $||\bu^h||_{\infty}\leq M$, then
$$\big|\Phi^\KK_\sigma({\bu^h}_{|\KK})\big |\leq C\sum_{\sigma, \sigma'\in \KK}|\bu_\sigma^h-\bu_{\sigma'}^h|$$
\item The conservation relations \eqref{conservation:K} and \eqref{conservation:Gamma}.
\end{itemize}
Then if there exists a constant $C_{max}$ such that the solutions of the scheme \eqref{RD:scheme} satisfy $||\bu^h||_{\infty}\leq C_{max}$ and a function $\bv\in L^2(\Omega)$ such that $(\bu^h)_{h}$ or at least a sub-sequence converges to $\bv$ in $L^2(\Omega)$, then $\bv$ is a weak solution of \eqref{eq1}
\end{theorem}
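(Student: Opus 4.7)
The proof will be a standard Lax--Wendroff style argument, with the key identity \eqref{algebre2} playing the role of the discrete weak formulation. I start by choosing a test function $\varphi\in C^1_0(\Omega)$ and letting $\bv^h\in V^h$ be its nodal $V^h$-interpolant so that $\bv^h\to\varphi$ and $\nabla \bv^h\to \nabla \varphi$ uniformly on each element (at least for shape-regular meshes with $k\geq 1$), and so that $\bv_\sigma=0$ at every boundary degree of freedom. Plugging this $\bv^h$ into \eqref{algebre2} along a subsequence $\bu^h\to \bv$ in $L^2(\Omega)$, I need to show that four of the five terms on the right-hand side vanish in the limit and that the remaining term produces the weak formulation \eqref{weak:eq1}.

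The easy terms are disposed of first. For the volume term, $\nabla \bv^h\to\nabla\varphi$ in $L^\infty$ and $\bbf(\bu^h)\to\bbf(\bv)$ in $L^2$ (by Lipschitz continuity of $\bbf$ on $[-C_{max},C_{max}]$ and dominated convergence), giving $-\int_\Omega \nabla \bv^h\cdot \bbf(\bu^h)\,d\bx\to -\int_\Omega\nabla\varphi\cdot\bbf(\bv)\,d\bx$. The $\partial\Omega$-integral vanishes identically because $\bv^h\equiv 0$ on $\partial\Omega$ thanks to $\varphi\in C^1_0(\Omega)$. For the jump term, the fact that $\varphi$ is $C^1$ implies $|[\bv^h]|=O(h^{k+1})$ pointwise on each internal edge; with $|\hbbf_\bn|$ uniformly bounded (from the $L^\infty$ bound on $\bu^h$ and the consistency of the flux), summing over the $O(h^{-d})$ edges of measure $O(h^{d-1})$ gives a total $O(h^{k})$, which vanishes as $h\to 0$ provided $k\geq 1$.

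The real work is in the two correction sums involving $\Phi_\sigma^{K}-\Phi_\sigma^{K,Gal}$ and their boundary analogue. The idea is to exploit the hypothesis $|\Phi_\sigma^\KK(\bu^h)|\leq C\sum_{\sigma,\sigma'\in\KK}|\bu_\sigma^h-\bu_{\sigma'}^h|$, and to verify the analogous estimate for $\Phi_\sigma^{K,Gal}$ by using $\sum_{\sigma\in K}\nabla\varphi_\sigma=0$ (so $\nabla\varphi_\sigma\cdot\bbf(\bu^h)$ can be replaced by $\nabla\varphi_\sigma\cdot(\bbf(\bu^h)-\bbf(\bar\bu_K))$), together with Lipschitz continuity of $\bbf$ restricted to $[-C_{max},C_{max}]$ and shape-regular inverse estimates. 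Since $\varphi\in C^1$, we have $|\bv_\sigma-\bv_{\sigma'}|\leq C h_K\|\nabla\varphi\|_\infty$, and a Cauchy--Schwarz estimate combined with an equivalence of norms on each $K$ lets me bound the correction sum by $C h\,\|\nabla\varphi\|_\infty$ times a quantity controlled by the oscillation of $\bu^h$ on each cell, which in turn is absorbed using the $L^\infty$ bound and the $L^2$ convergence of $\bu^h$. The boundary correction sum is handled identically on the $(d-1)$-dimensional boundary elements.

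The hardest step is precisely this estimate on the correction terms, because the hypothesis bounds $\Phi_\sigma^K$ by $\sum|\bu_\sigma-\bu_{\sigma'}|$ (which need not tend to zero without a \textbf{BV}-type bound), so one cannot simply use $L^\infty$ control alone. I expect that the right way to close the argument is to pair each factor $|\bv_\sigma-\bv_{\sigma'}|=O(h_K)$ with one factor $|\bu_\sigma-\bu_{\sigma'}|$ via Cauchy--Schwarz at the element level, turning the double sum into $h$ times a discrete $H^1$-like seminorm of $\bu^h$, and then to bound the latter by $\|\bu^h\|_{L^\infty}\,\|\bu^h-\bar\bu_K\|_{L^1}^{1/2}$-type quantities that vanish because of the strong $L^2$ convergence of $\bu^h$ to $\bv$. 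Once all the spurious terms are controlled, the identity \eqref{algebre2} reduces in the limit to $-\int_\Omega\nabla\varphi\cdot\bbf(\bv)\,d\bx=0$ for every $\varphi\in C^1_0(\Omega)$, which is exactly \eqref{weak:eq1} in the interior, and a similar passage to the limit on the boundary residual sum recovers the boundary contribution, so that $\bv$ is a weak solution of \eqref{eq1}.
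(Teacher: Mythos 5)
Your overall strategy is exactly the one the paper points to: the paper does not prove Theorem \ref{th:LW} itself but defers to \cite{AbgrallRoe}, stating only that the proof rests on \eqref{algebre2}, on the consistency and antisymmetry of the flux, and on an adaptation of \cite{kroner}. Your treatment of the Galerkin part is essentially correct (one quibble: for $\varphi\in C^1$ the jump of the interpolant is only $o(h)$, not $O(h^{k+1})$, but $o(h)$ suffices for the edge sum to vanish). The genuine gap is in the step you yourself flag as the hardest one, and your proposed closure does not close it. Bounding the correction sum by $C\,h\,\|\nabla\varphi\|_\infty\sum_{K}\sum_{\sigma,\sigma'\in K}|\bu_\sigma-\bu_{\sigma'}|$ and then applying Cauchy--Schwarz over the elements together with the norm equivalence $\sum_{\sigma,\sigma'\in K}|\bu_\sigma-\bu_{\sigma'}|^2\leq C h^{-d}\int_K|\bu^h-\bar\bu_K|^2\,d\bx$ yields a bound of the form $C\,h^{1-d}\,\varepsilon_h$ with $\varepsilon_h=\|\bu^h-P_0\bu^h\|_{L^2}\to 0$. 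This is conclusive only for $d=1$. For $d\geq 2$ the factor $h^{1-d}$ blows up, and plain $L^2$ convergence plus the $L^\infty$ bound gives no rate on $\varepsilon_h$; likewise a genuine BV bound would only give $O(1)$, not $o(1)$. So the decisive estimate is asserted, not proved: in several space dimensions one needs an additional ingredient --- typically a weak-BV (entropy-dissipation) estimate on $\sum_{\sigma,\sigma'}|\bu_\sigma-\bu_{\sigma'}|$ coming from the structure of the scheme, which is precisely what the machinery of \cite{kroner} supplies and what your hypotheses, as you use them, do not.

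A secondary inconsistency: you take $\varphi\in C^1_0(\Omega)$, so that $\bv^h$ vanishes at all boundary degrees of freedom and the $\partial\Omega$ integral and the boundary correction sum in \eqref{algebre2} drop out identically. With such test functions you can only recover the interior equation $-\int_\Omega\nabla\varphi\cdot\bbf(\bv)\,d\bx=0$; your closing sentence, claiming that "a similar passage to the limit on the boundary residual sum recovers the boundary contribution," is vacuous, since those terms are zero for your choice of $\bv^h$. To obtain the full weak formulation \eqref{weak:eq1}, including the flux term $\mathbf{\mathcal{F}}_\bn(\bu,\bu_b)-\bbf(\bu)\cdot\bn$ on $\partial\Omega$, you must also test with functions that do not vanish on the boundary and then pass to the limit in the boundary residuals using \eqref{conservation:Gamma} and the consistency of the boundary flux; this part of the argument is missing.
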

\begin{proof}
The proof can be found in \cite{AbgrallRoe}, it uses \eqref{algebre2} and some adaptation of the ideas of \cite{kroner}. One of the key arguments comes from the consistency of the flux $\hbbf$ as well as \eqref{flux:def:1}\end{proof}

Another consequence of \eqref{algebre2} is the following result on entropy inequalities:
\begin{proposition}\label{th:entropy}
Let $(U,\mathbf{g})$ be a  entropy-flux couple for \eqref{eq1} and $\hat{\mathbf{g}}_\bn$ be a numerical entropy flux consistent with $\mathbf{g}\cdot \bn$. Assume that the residuals satisfy:
for any element $K$,
\begin{subequations}\label{entropy}
\begin{equation}\label{entropy:1}
\sum_{\sigma \in K}\langle\nabla_\bu U(\bu_\sigma), \Phi_\sigma^K\rangle \geq \int_{\partial K} \hat{\mathbf{g}}_\bn(\bu^h,\bu^{h,-}) \; d\gamma
\end{equation}
and for any boundary edge $e$,
\begin{equation}\label{entropy:2}
\sum_{\sigma \in e}\langle\nabla_\bu U(\bu_\sigma) ,  \Phi_\sigma^e\rangle  \geq \int_{e} \big (\hat{\mathbf{g}}_\bn(\bu^h,\bu_b)- \mathbf{g}(\bu^h)\cdot \bn \big )\; d\gamma.
\end{equation}
\end{subequations}
Then, under the assumptions of theorem \ref{th:LW}, the limit weak solution also satisfies the following entropy inequality: for any $\varphi\in C^1(\overline{\Omega})$, $\varphi\geq 0$, 
$$-\int_\Omega \nabla \varphi\cdot \mathbf{g}(\bu) \; d\bx+\int_{\partial\Omega^-}\varphi\;\mathbf{g}(u_b)\cdot \bn  \; d\gamma \leq 0.$$
\end{proposition}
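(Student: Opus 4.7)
The plan is to mimic the algebraic derivation that led to \eqref{algebre2} in the proof of Theorem \ref{th:LW}, but now testing the scheme against the entropy variables $\nabla_\bu U(\bu_\sigma)$ weighted by the nodal values of $\varphi$, and using the local entropy inequalities \eqref{entropy:1}--\eqref{entropy:2} in place of the conservation equalities. The pointwise non-negativity of $\varphi$ is what preserves the direction of the inequality throughout; modulo this sign bookkeeping, the limit passage is essentially the one already carried out in Theorem \ref{th:LW}.

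Concretely, fix $\varphi\in C^1(\overline{\Omega})$ with $\varphi\ge 0$, set $\varphi_\sigma=\varphi(\bx_\sigma)$ and $v_\sigma=\nabla_\bu U(\bu_\sigma^h)$ (uniformly bounded thanks to the assumed $L^\infty$ bound on $\bu^h$ and the smoothness of $U$). Multiplying \eqref{RD:scheme} by $\varphi_\sigma v_\sigma$ and permuting the sums over $\sigma$ with those over $K$ and $\Gamma$ yields
\begin{equation*}
0=\sum_{K\subset\Omega}\sum_{\sigma\in K}\varphi_\sigma\langle v_\sigma,\Phi_\sigma^K(\bu^h)\rangle+\sum_{\Gamma\subset\partial\Omega}\sum_{\sigma\in\Gamma}\varphi_\sigma\langle v_\sigma,\Phi_\sigma^\Gamma(\bu^h,\bu_b)\rangle.
\end{equation*}
On each $K$ pick $\bar\varphi_K:=\min_{\sigma\in K}\varphi_\sigma\ge 0$ and split $\varphi_\sigma=\bar\varphi_K+(\varphi_\sigma-\bar\varphi_K)$. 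Applying \eqref{entropy:1} (multiplied by the non-negative scalar $\bar\varphi_K$) and analogously \eqref{entropy:2} on boundary elements produces
\begin{equation*}
0\ge\sum_K\bar\varphi_K\int_{\partial K}\hat{\mathbf{g}}_\bn(\bu^h,\bu^{h,-})\,d\gamma+\sum_\Gamma\bar\varphi_\Gamma\int_\Gamma\bigl(\hat{\mathbf{g}}_\bn(\bu^h,\bu_b)-\mathbf{g}(\bu^h)\cdot\bn\bigr)\,d\gamma+R^h,
\end{equation*}
where $R^h$ collects the oscillation remainders $(\varphi_\sigma-\bar\varphi_K)\langle v_\sigma,\Phi_\sigma^K\rangle$ and their boundary counterparts.

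Passage to the limit $h\to 0$ then proceeds exactly as in \cite{AbgrallRoe,kroner}. The remainder $R^h$ is controlled by $|\varphi_\sigma-\bar\varphi_K|\le h_K\|\nabla\varphi\|_\infty$ together with the residual bound assumed in Theorem \ref{th:LW}, so $R^h\to 0$ by shape-regularity and the $L^2$-convergence of $\bu^h$ to $\bv$. For the main term, the antisymmetry \eqref{flux:def:1} rewrites the element boundary sum as a sum of inter-element jumps plus a $\partial\Omega$-contribution, exactly as in the derivation of \eqref{algebre2}; consistency \eqref{flux:def:2} of $\hat{\mathbf{g}}_\bn$ with $\mathbf{g}\cdot\bn$ combined with $L^2$-convergence sends the jump sum to $-\int_\Omega\nabla\varphi\cdot\mathbf{g}(\bv)\,d\bx$, while the boundary sum merges with the $\Gamma$-contribution to produce $\int_{\partial\Omega^-}\varphi\,\mathbf{g}(\bu_b)\cdot\bn\,d\gamma$. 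The genuine difficulty is this last inter-element limit argument, which is handled by Kr\"oner's technique transferred in \cite{AbgrallRoe}; the ingredient specific to the present proposition is only the sign tracking that turns the equality \eqref{algebre2} into the required inequality.
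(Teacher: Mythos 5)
Your proposal is correct and follows essentially the same route as the paper, whose own proof of Proposition \ref{th:entropy} is only the remark that it is ``similar to that of Theorem \ref{th:LW}'', i.e.\ test against $\varphi_\sigma\nabla_\bu U(\bu_\sigma)$, replace the conservation equalities by the local entropy inequalities \eqref{entropy:1}--\eqref{entropy:2} (with $\varphi\ge 0$ preserving the sign), and pass to the limit by the Lax--Wendroff/Kr\"oner argument of \cite{AbgrallRoe,kroner} using consistency and antisymmetry of $\hat{\mathbf{g}}_\bn$. Your write-up in fact supplies more detail (the $\min_{\sigma\in K}\varphi_\sigma$ splitting and the estimate of the oscillation remainder) than the paper does.
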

\begin{proof}
The proof is similar to that of theorem \ref{th:LW}.
\end{proof}

\bigskip
Another consequence of \eqref{algebre2} is the following condition under which  one can guarantee to have a $k+1$-th order accurate scheme. We first introduce the (weak) truncation error
\begin{equation}\label{truncation}
\mathcal{E}\bigl(\bu^h, \varphi\bigr) = 
\sum_{\sigma \in \mathcal{S}_h}\varphi_\sigma\bigg [
\sum_{K\subset \Omega, \sigma\in K} \Phi_\sigma^K+\sum_{\Gamma\subset \partial\Omega, \sigma\in \Gamma}\Phi_\sigma^\Gamma\bigg ].
\end{equation}

If the solution of the \emph{steady problem} $\bu$ is smooth enough and the residuals,
computed with the interpolant $\pi_h(\bu)$ of the solution, are such that for any element $K$ and boundary element $\Gamma$
\begin{equation}\label{eq: residual accuracy}
%\begin{split}
\Phi_\sigma^K(\pi_h(\bu)) = \mathcal{O}\bigl(h^{k+d}\bigr), \qquad 
\Phi_\sigma^\Gamma(\pi_h(\bu))=\mathcal{O}\bigl(h^{k+d-1}\bigr)
\end{equation}
and if the approximation $\bbf\bigl(\bu^h\bigr)$ of $\bbf(\bu)$ is accurate with  order 
$k+1$, then the truncation error satisfies the following relation
\begin{equation*}
|\mathcal{E}\bigl(\pi_h(\bu), \varphi\bigr)| \le C( \bbf, \bu)\; ||\varphi||_{H^1(\Omega)}\; h^{k+1},
\end{equation*}
with $C$ a constant which depends only on $\bbf$, and $||\bu||_\infty$.
\begin{proof}
We first show that $\Phi_\sigma^{K, Gal}(\pi_h(\bu))=\mathcal{O}\bigl(h^{k+d}\bigr)$. Since $\bu$ is regular enough, we have pointwise $\text{ div } \bbf(\bu) =0$ on $K$, so that, by consistency of the flux, 
$$
0=-\int_K \nabla \varphi\cdot \bbf(\bu)\; d\bx+\int_{\partial K} \varphi \hbbf_\bn(\bu, \bu)\; d\gamma.
$$
Then,
\begin{equation*}
\begin{split}
\Phi_\sigma^{K, Gal}(\pi_h(\bu))&=-\int_K\nabla\varphi_\sigma\cdot \big (\bbf(\pi_h(\bu))-\bbf(\bu)\big ) \; d\bx +\int_{\partial K}\varphi_\sigma\big ( \hbbf_\bn(\pi_h(\bu), \pi_h(\bu))-\hbbf_\bn(\bu, \bu)\big )\; d\gamma\\
& = |K| \times \mathcal{O}(h^{-1}) \times \mathcal{O}(h^{k+1}) + |\partial K|\times  \mathcal{O}(1 )\mathcal{O}(h^{k+1})\\
&=\mathcal{O}(h^{d})\times \mathcal{O}(h^{-1}) \times \mathcal{O}(h^{k+1})+\mathcal{O}(h^{d-1})\times  \mathcal{O}(1 )\times\mathcal{O}(h^{k+1})\\
&=\mathcal{O}(h^{d+k})
\end{split}
\end{equation*}
because the flux is Lipschitz continuous and the mesh is regular.

The result on the boundary term is similar since  the boundary numerical flux is upwind and the boundary of $\Omega$ is not characteristic: only two types of boundary faces exists, the upwind and downwind ones. On the downwind faces, the boundary flux vanishes. On the upwind ones, we get the estimate  for the Galerkin boundary residuals thanks to the  same approximation argument.

The mesh is assumed to be regular: the number of elements (resp. edges) is $O(h^{-d})$ (resp. $O(h^{-d+1})$). 
Let us assume \eqref{eq: residual accuracy}. Let $\bv\in C_0^1(\overline{\Omega})$. Using \eqref{algebre2} for $\pi_h(\bu)$,\begin{equation*}
\begin{split}
\mathcal{E}\bigl(\bu^h, \varphi\bigr) =&  -\int_\Omega \nabla \pi_h(v) \cdot \bbf(\pi_h(\bu)) \; d\bx  +\int_{\partial \Omega}\pi_h(v) \big (\hat{\bbf}_\bn(\pi_h(\bu),\bu_b)-\bbf(\pi_h(\bu))\cdot \bn\big ) \; d\gamma\\
&\qquad +\sum\limits_{e\in \mathcal{E}_h} \int_e[\pi_h(v)]\hbbf_\bn(\pi_h(\bu),\pi_h(\bu)^{-})\; d\gamma+\sum\limits_{K\subset \Omega}\frac{1}{\#K}\bigg ( \sum\limits_{\sigma,\sigma'\in K} (\bv_\sigma-\bv_{\sigma'})\bigg ( \Phi_\sigma^K(\pi_h(\bu))-\Phi_\sigma^{K, Gal}(\pi_h(\bu)) \bigg ) \bigg )\\
&\qquad \qquad
+\sum\limits_{\Gamma\subset \partial \Omega} \frac{1}{\#\Gamma}\bigg ( \sum\limits_{\sigma,\sigma'\in \Gamma} (\bv_\sigma-\bv_{\sigma'})(\Phi_\sigma^\Gamma \big (\pi_h(\bu),\bu_b)-\Phi_\sigma^{Gal,\Gamma}(\pi_h(\bu),\bu_b)\big )\bigg ).
\end{split}
\end{equation*}
 where $\pi_h(\bu)^{-}$ represents the interpolant of $\bu$ on $K^-$.

We have, using 
$$-\int_\Omega \nabla \pi_h(v)\cdot \bbf(\bu)\; d\bx+\int_{\partial \Omega} \pi_h(v) \big (\mathcal{F}_\bn(\bu,\bu_b)-\bbf(\bu)\cdot \bn\big ) \; d\gamma=0,$$
\begin{equation*}
\begin{split}
-\int_\Omega \nabla \pi_h(v) &\cdot \bbf(\pi_h(\bu)) \; d\bx  +\int_{\partial \Omega}\pi_h(v) \big (\hat{\bbf}_\bn(\pi_h(\bu),\bu_b)-\bbf(\pi_h(\bu))\cdot \bn\big ) \; d\gamma\\&=
-\int_\Omega \nabla \pi_h(v) \cdot \big ( \bbf(\pi_h(\bu))-\bbf(\bu)\big ) \; d\bx
+\int_{\partial \Omega}\pi_h(v) \big ( \hbbf_\bn(\pi_h(\bu),\bu_b)-\hbbf_\bn(\bu,\bu_b)\big )\; d\gamma\\&\qquad \qquad - \int_{\partial \Omega}\pi_h(v)\big( \bbf(\pi_h(\bu))-\bbf(\bu) \big )\cdot \bn \; d\gamma\\
&=\mathcal{O}(h^{k+1})
\end{split}
\end{equation*}
since the flux on the boundary is the upwind flux $\mathcal{F}_\bn$, and using  the approximation properties of $\pi_h(\bu)$.

Then
$$\sum\limits_{e\in \mathcal{E}_h} \int_e[\pi_h(v)]\hbbf_\bn(\pi_h(\bu),\pi_h(\bu)^{-})\; d\gamma=\mathcal{O}(h^{-d+1}) \times \mathcal{O}(h^{d-1})\times  \mathcal{O}(h^{k+1})\times \mathcal{O}(1)=\mathcal{O}(h^{k+1}),$$
$$\sum\limits_{K\subset \Omega}\frac{1}{\#K}\bigg ( \sum\limits_{\sigma,\sigma'\in K} (\bv_\sigma-\bv_{\sigma'})\bigg ( \Phi_\sigma^K(\pi_h(\bu))-\Phi_\sigma^{K, Gal}(\pi_h(\bu)) \bigg ) \bigg )= \mathcal{O}(h^{-d})\times  \mathcal{O}(h) \times  \mathcal{O}(h^{k+d})=\mathcal{O}(h^{k+1}),$$
and similarly
$$\sum\limits_{\Gamma\subset \partial \Omega} \frac{1}{\#\Gamma}\bigg ( \sum\limits_{\sigma,\sigma'\in \Gamma} (\bv_\sigma-\bv_{\sigma'})(\Phi_\sigma^\Gamma \big (\pi_h(\bu),\bu_b)-\Phi_\sigma^{Gal,\Gamma}(\pi_h(\bu),\bu_b)\big )\bigg )=\mathcal{O}(h^{-d+1})\times\mathcal{O}(h) \times  \mathcal{O}(h^{k+d-1})=\mathcal{O}(h^{k+1})$$
thanks to the regularity of the mesh, that $\pi_h(v)$ is the interpolant of a $C^1$ function and the previous estimates.
\end{proof}
\begin{remark}[Numerical integration]
In practice, the integrals are evaluated by numerical integration.  The results still holds true provided the quadrature formula are of order $k+1$. This is in contrast with the common practice, but let us emphasis this is valid only for steady problems. However, similar arguments can be developed for unsteady problems, see \cite{Mario,Abgrall2017}. 
\end{remark}

\section{Flux formulation of Residual Distribution schemes}\label{flux}
%%%%%%%
In this section we show that the scheme \eqref{RD:scheme} also admits a flux formulation, with an explicit form of the flux: the method is also locally conservative. Local conservation is of course  well known for the Finite Volume and discontinuous Galerkin  approximations. It is much less understood for the  continuous finite elements methods, despite the papers \cite{Hughes1,BurQS:10}.
Referring to \eqref{flux:def}, the aim of this section  is to define $\hbbf$ and
$S^\pm$ in the RDS case.

We first show why a finite volume can be reinterpreted as an RD scheme.  This helps to understand the structure of the problem. Then we show that any RD scheme can be equivalently rephrased as a finite volume scheme, we explicitly provide the flux formula as well as the control  volumes. In order to illustrate this result, we give several examples: the general RD scheme with $\PP^1$ and $\PP^2$ approximation on simplex, the case of a $\PP^1$ RD scheme using a particular form of the residuals so that one can better see the connection with more standard formulations, and finally  an example with a discontinuous Galerkin formulation using $\PP^1$  approximation.
%We first provide explicit formula for the $\PP^1$ and then $\PP^2$ case. This enables to point out some generalisation since the original framework is using a globally continuous approximation of the solution. Then we generalise all this, forgetting about the shape of the element $K$, the degree and the type of methods: the approximation can now be only locally continuous as for dG.

%%%%%%%%%%%%%%%%%%%%%%%%%%%%%%%%%%%%%%%%

  %%%%%%%%%%%%%%%%%%%%%%%%%%%%
\subsection{Finite volume as Residual distribution schemes}
Here, we rephrase \cite{Abgrall99}. The notations are defined in Figure \ref{fig:fv}.
\begin{figure}[h]
\begin{center}
\subfigure[]{\includegraphics[width=0.45\textwidth]{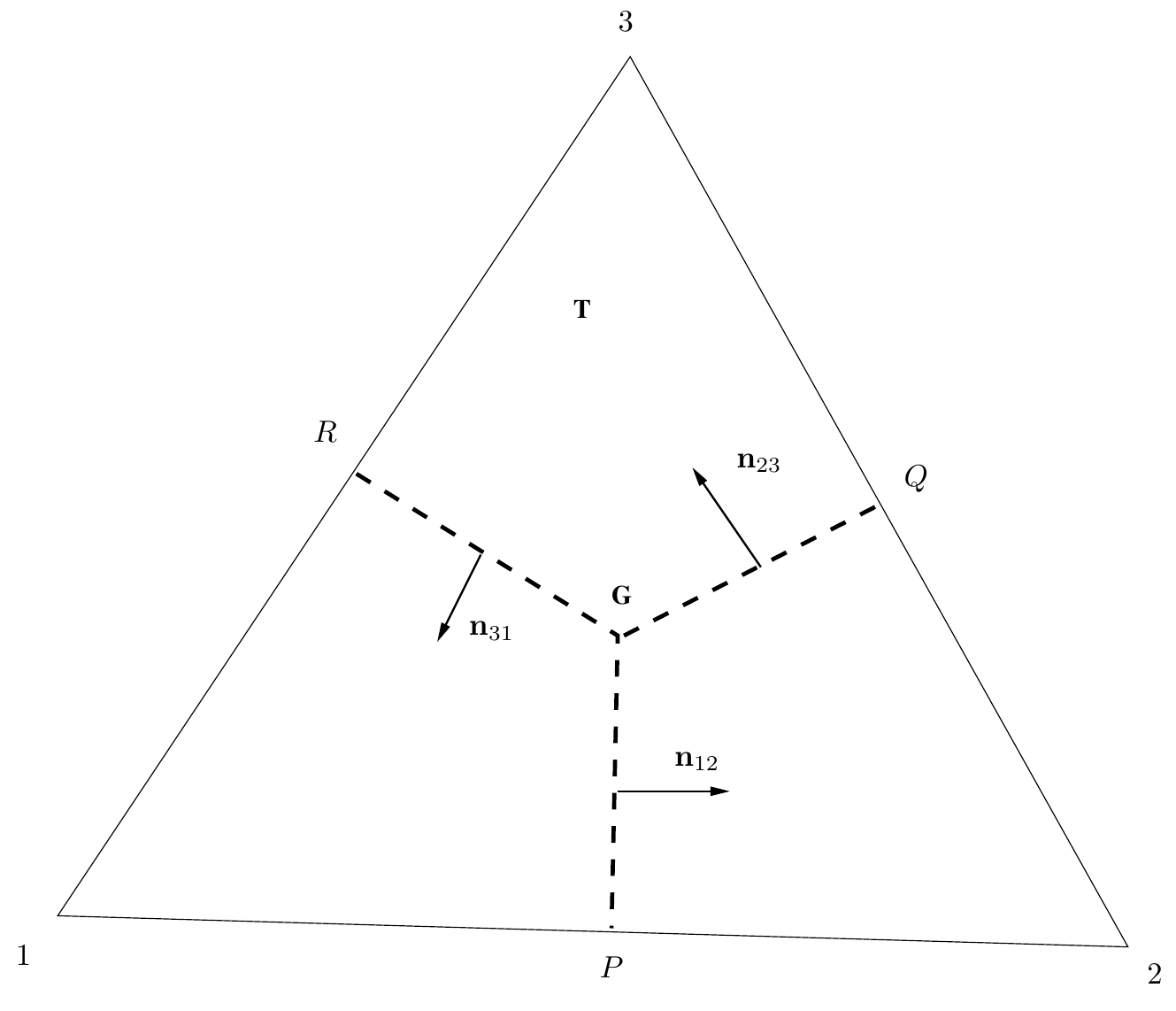}}
\subfigure[]{\includegraphics[width=0.45\textwidth]{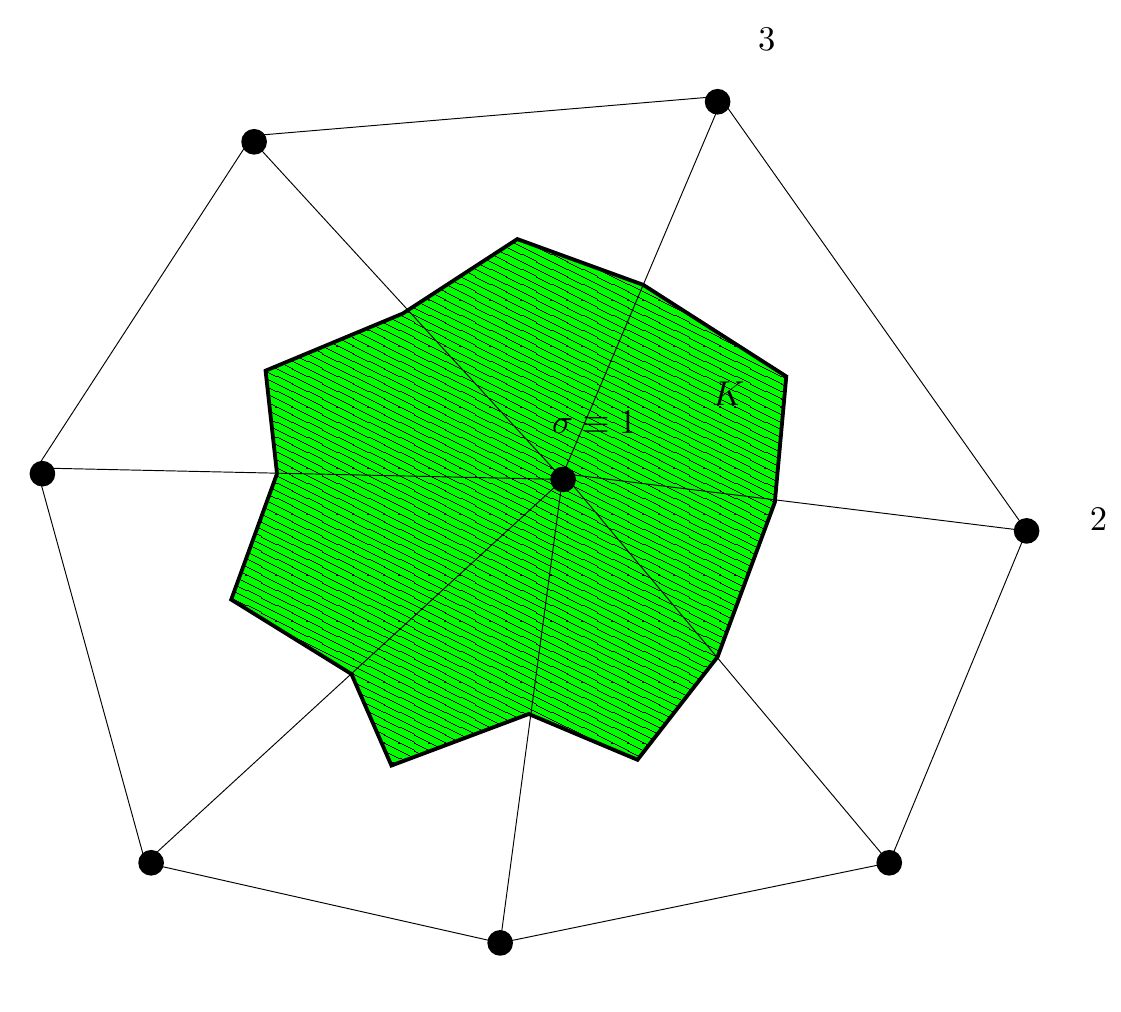}}
\end{center}
\caption{\label{fig:fv} Notations for the finite volume schemes. On the left: definition of the control volume for the degree of freedom $\sigma$.
 The vertex $\sigma$ plays the role of the vertex $1$ on the left picture for the triangle K. The control volume $C_\sigma$ associated to $\sigma=1$ is green on the right and corresponds to $1PGR$ on the left. The vectors $\bn_{ij}$ are normal to the internal edges scaled by the corresponding edge length}
\end{figure}
Again, we specialize ourselves to the case of triangular elements, but  \emph{exactly the same arguments} can be given for more general elements,
 provided a conformal approximation space can be constructed. This is  the case for
triangle elements, and we can take $k=1$.

The control volumes in this case are defined as the median cell, see figure \ref{fig:fv}.
We concentrate on the  approximation of $\text{div } \bbf$, see equation \eqref{eq1}.   Since the boundary of $C_\sigma$ is a closed polygon, the scaled outward normals $\bn_\gamma$ to $\partial C_\sigma$ sum up to 0:
$$
\sum_{\gamma \subset \partial C_\sigma}\bn_\gamma=0$$
where $\gamma$ is any of the segment included in $\partial C_\sigma$, such as $PG$ on Figure \ref{fig:fv}. 
Hence
\begin{equation*}
\begin{split}
\sum_{\gamma \subset \partial C_\sigma} \hbbf_{\bn_\gamma }(\bu_\sigma , \bu^-& )= \sum_{\gamma \subset \partial C_\sigma} \hbbf_{\bn_\gamma }(\bu_\sigma, \bu^- )- \bigg (\sum_{\gamma \subset \partial C_\sigma}\bn_\gamma\bigg )\cdot \bbf (\bu_\sigma)\\
&=\sum\limits_{K, \sigma\in K} \sum\limits_{\gamma \subset \partial C_\sigma\cap K} \big ( \hbbf_{\bn_\gamma }(\bu_\sigma, \bu^- )-\bbf (\bu_\sigma)\cdot \bn_\gamma \big )
\end{split}
\end{equation*}
To make things explicit, in $K$, the internal boundaries are $PG$, $QG$ and $RG$, and those around $\sigma\equiv 1$ are $PG$ and $RG$.
We set
\begin{equation}
\begin{split}
\Phi_\sigma^K(\bu^h)&=\sum\limits_{\gamma\subset \partial C_\sigma\cap K} \big ( \hbbf_{\bn_\gamma }(\bu_\sigma, \bu^- )-\bbf (\bu_\sigma)\cdot \bn_\gamma \big )\\
&=\sum\limits_{\gamma\subset \partial ( C_\sigma\cap K )}  \hbbf_{\bn_\gamma }(\bu_\sigma, \bu^- ).
\end{split}
\label{fv:res:sigma}
\end{equation}
The last relation uses the consistency of the flux and the fact that $C_\sigma\cap K$ is a closed polygon. The quantity $\Phi_\sigma^K(\bu^h)$ is the normal flux on $C_\sigma\cap K$.
If now we sum up these three quantities and get:
\begin{equation*}
\begin{split}
\sum_{\sigma\in K} \Phi_\sigma^K(\bu_h)&= \bigg ( \hbbf_{\bn_{12}}(\bu_1,\bu_2)-\hbbf_{\bn_{13}}(\bu_1,\bu_3)-\bbf(\bu_1)\cdot\bn_{12}+\bbf(\bu_1)\cdot\bn_{31}\bigg )\\
&+\bigg ( \hbbf_{\bn_{23}}(\bu_2,\bu_3)-\hbbf_{\bn_{12}}(\bu_2,\bu_1)+\bbf(\bu_2)\cdot\bn_{12}-\bbf(\bu_2)\cdot\bn_{23}\bigg )\\
&+\bigg ( -\hbbf_{\bn_{23}}(\bu_3,\bu_2)+\hbbf_{\bn_{31}}(\bu_3,\bu_1)-\bbf(\bu_3)\cdot\bn_{23}+\bbf(\bu_3)\cdot\bn_{31}\bigg )\\
&= \bbf(\bu_1)\cdot \big ( \bn_{12}-\bn_{31}\big ) +\bbf(\bu_2)\cdot \big ( -\bn_{23}+\bn_{31}\big )
+\bbf(\bu_3)\cdot \big ( \bn_{31}-\bn_{23}\big )\\
&=\bbf(\bu_1)\cdot\frac{\bn_1}{2}+\bbf(\bu_2)\cdot\frac{\bn_2}{2}+\bbf(\bu_3)\cdot\frac{\bn_3}{2}
\end{split}
\end{equation*}
where $\bn_j$ is the scaled inward normal of the edge opposite to vertex $\sigma_j$, i.e. twice the gradient of the $\PP^1$ basis function
 $\varphi_{\sigma_j}$ associated to this degree of freedom.
Thus, we can reinterpret the sum as the boundary integral of the Lagrange interpolant of the flux.
The finite volume scheme is then a residual distribution scheme with residual defined by \eqref{fv:res:sigma}
and a total residual defined by
\begin{equation}
\label{fv:tot:residu}
\Phi^K:=\int_{\partial K} \bbf^h\cdot \bn , \qquad \bbf^h=\sum_{\sigma\in K} \bbf(\bu_\sigma)\varphi_\sigma.
\end{equation}

\subsection{Residual distribution schemes as  finite volume schemes.}
In this section, we show how to interpret  RD schemes as finite volume schemes. This amounts to defining control volumes and flux functions. 
We first have to adapt the notion of consistency. As recalled in the section \ref{sec:conservation}, two of the key arguments in the proof  of the Lax-Wendroff theorem are related to the structure of the flux, for classical finite volume schemes. In \cite{AbgrallRoe}, the proof is adapted to the case of Residual Distribution schemes. The property that stands for the consistency is that if all the states  are identical in an element, then each of the residuals vanishes. Hence, we define a multidimensional flux as follows:
\begin{definition}\label{MD:consistency}
 A multidimensional flux 
$$\hbbf_\bn:=\hbbf_\bn(\bu_1, \ldots , \bu_N)$$
is consistent if, when $\bu_1= \bu_2= \ldots = \bu_N=\bu$ then
$$\hbbf_\bn(\bu, \ldots , \bu)=\bbf(\bu)\cdot \bn.$$
\end{definition}
We proceed first with the general case and show the connection with elementary fact about graphs, and then provide several examples. The results of this section apply to any 
finite element method but also to discontinuous Galerkin methods. There is no need for exact evaluation of integral formula (surface or boundary), so that these results apply to schemes as they are implemented.

\subsubsection{General case}
One can deal with the general case, i.e when $K$ is a polytope contained in $\R^d$ with degrees of freedoms on the boundary of $K$. The set $\mathcal{S}$ is the set of degrees of freedom. We consider a triangulation $\mathcal{T}_K$ of $K$ whose vertices  are exactly the elements of $\mathcal{S}$. Choosing an orientation of $K$, it is propagated on $\mathcal{T}_K$: the edges are oriented.

The problem is to find quantities $\hbbf_{\sigma,\sigma'}$ for any edge $[\sigma,\sigma']$ of   $\mathcal{T}_K$ such that:
\begin{subequations}\label{GC:1}
\begin{equation}
\label{GC:1.1}
\Phi_\sigma=\sum_{\text{ edges }[\sigma,\sigma']} \hbbf_{\sigma,\sigma'}+\hbbf_\sigma^{b}
\end{equation}
with
\begin{equation}
\hbbf_{\sigma,\sigma'}=-\hbbf_{\sigma',\sigma}
\label{GC:1.2}
\end{equation}
and $\hbbf_\sigma^{b}$ is the 'part' of $\oint_{\partial K} \hbbf_\bn(\bu^h,\bu^{h,-}) \; d\gamma$ associated to $\sigma$. {The control volumes will be defined by their normals so that we get consistency.}

Note that \eqref{GC:1.2} implies the conservation relation
\begin{equation}
\label{GC:conservation}
\sum\limits_{\sigma\in K}\Phi_\sigma=\sum\limits_{\sigma\in K}\hbbf_\sigma^b.
\end{equation}
In short, we will consider 
\begin{equation}
\label{BC:1.3}
\hbbf_\sigma^b=\oint_{\partial K} \varphi_\sigma\; \hbbf_\bn (\bu^h,\bu^{h,-}) \; d\gamma,
\end{equation}
\end{subequations}
but other  examples can be considered provided the consistency \eqref{GC:conservation} relation holds true, see for example section \ref{particular}.
Any edge $[\sigma,\sigma']$ is either direct or, if not, $[\sigma',\sigma]$ is direct. Because of \eqref{GC:1.2}, we only need to know $\hbbf_{\sigma,\sigma'}$ for direct edges. Thus we introduce the notation $\hbbf_{\{\sigma,\sigma'\}}$ for  the flux  assigned to  the direct edge whose extremities are $\sigma$ and $\sigma'$. We can rewrite \eqref{GC:1.1} as, for any $\sigma\in \mathcal{S}$,
\begin{equation}
\label{GC:1.1bis}
\sum_{\sigma'\in \mathcal{S}} \varepsilon_{\sigma,\sigma'} \hbbf_{\{\sigma,\sigma'\}}=\Psi_\sigma:=\Phi_\sigma-\hbbf_\sigma^b,
\end{equation}
with $$
\varepsilon_{\sigma,\sigma'}=\left \{
\begin{array}{ll}
0& \text{ if }\sigma \text{ and }\sigma' \text{ are not on the same edge of }\mathcal{T},\\
1& \text{ if } [\sigma,\sigma']\text{ is an edge and } \sigma \rightarrow \sigma' \text{ is direct,}\\
-1&  \text{ if } [\sigma,\sigma']\text{ is an edge and } \sigma' \rightarrow \sigma \text{ is direct.}
\end{array}
\right .
$$
$\mathcal{E}^+$ represents the set of direct edges.

Hence the problem is to find  a vector $\hbbf=(\hbbf_{\{\sigma,\sigma'\}})_{\{\sigma,\sigma'\} \text{ direct edges}}$ such that
$$A\hbbf=\Psi$$
where $\Psi=(\Psi_\sigma)_{\sigma\in \mathcal{S}}$ and $A_{\sigma \sigma'}=\varepsilon_{\sigma,\sigma'}$.

We have  the following lemma which shows the existence of a solution.
\begin{lemma}\label{lemma:flux}
For any couple $\{\Phi_\sigma\}_{\sigma\in \mathcal{S}}$ and $\{\hbbf_\sigma^{b}\}_{\sigma\in \mathcal{S}}$ satisfying the condition  \eqref{GC:conservation}, there exists numerical flux functions $\hbbf_{\sigma,\sigma'}$ that satisfy \eqref{GC:1}. Recalling that the  matrix of the Laplacian of the graph is $L=AA^T$, we have
\begin{enumerate}
\item The rank of $L$ is $|\mathcal{S}|-1$ and its image is $\big (\text{span}\{\mathbf{1}\})^\bot$. We still denote the inverse of $L$ on $\big (\text{span}\{\mathbf{1}\} )^\bot$ by $L^{-1}$,
\item 
With the previous notations, a solution is 
\begin{equation}
\label{eq:lemma}\big (\hbbf_{\{\sigma,\sigma'\}}\big )_{\{\sigma,\sigma'\} \text{ direct edges}}=A^TL^{-1} \big (\Psi_\sigma\big )_{\sigma\in \mathcal{S}}.\end{equation}
\end{enumerate}
\end{lemma}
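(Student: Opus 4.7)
The plan is to recognize $A$ as the signed vertex–edge incidence matrix of the connected oriented graph induced by the triangulation $\mathcal{T}_K$, and then apply the standard linear algebra of graph Laplacians.

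First I would observe that each column of $A$ corresponds to a direct edge $[\sigma,\sigma']$ and carries exactly one $+1$ (at $\sigma$) and one $-1$ (at $\sigma'$), so every column sums to zero. Equivalently $\mathbf{1}^T A = 0$, i.e.\ $\mathbf{1}\in \ker(A^T)$. Since $\mathcal{T}_K$ is a triangulation of the polytope $K$ it is connected as a graph, and a standard argument (an element of $\ker(A^T)$ must be constant on connected components) gives $\ker(A^T) = \mathrm{span}\{\mathbf{1}\}$. Consequently $\mathrm{rank}(A) = |\mathcal{S}|-1$ and $\mathrm{image}(A) = \ker(A^T)^{\perp} = \big(\mathrm{span}\{\mathbf{1}\}\big)^{\perp}$.

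Next I would turn to the Laplacian $L = A A^T$. Since $L$ is symmetric positive semidefinite with $\ker(L) = \ker(A^T)=\mathrm{span}\{\mathbf{1}\}$, its rank is $|\mathcal{S}|-1$ and $\mathrm{image}(L)=\big(\mathrm{span}\{\mathbf{1}\}\big)^{\perp}$, establishing item~1. The restriction of $L$ to this orthogonal complement is invertible, which justifies the notation $L^{-1}$ in the statement.

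Finally I would verify the solution formula. Summing \eqref{GC:1.1bis} over $\sigma$ and using that each $\hbbf_{\{\sigma,\sigma'\}}$ appears with coefficients $+1$ and $-1$, we get $\sum_\sigma \Psi_\sigma = \sum_\sigma \Phi_\sigma - \sum_\sigma \hbbf_\sigma^{b}=0$ thanks to the consistency hypothesis \eqref{GC:conservation}; hence $\Psi\in\big(\mathrm{span}\{\mathbf{1}\}\big)^{\perp}=\mathrm{image}(L)$, so $L^{-1}\Psi$ is well-defined. Setting $\hbbf := A^T L^{-1}\Psi$ yields
\[
A\hbbf \;=\; A A^T L^{-1}\Psi \;=\; L L^{-1}\Psi \;=\; \Psi,
\]
which is precisely \eqref{GC:1.1bis}. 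The antisymmetry \eqref{GC:1.2} is built into the definition of $\hbbf_{\{\sigma,\sigma'\}}$ since we only assign a flux to each direct edge and use the sign convention $\varepsilon_{\sigma,\sigma'}=-\varepsilon_{\sigma',\sigma}$ to read off the two orientations. The only slightly delicate point is ensuring $\Psi$ lies in the range of $L$; everything else is bookkeeping, and connectedness of $\mathcal{T}_K$ is the structural fact doing all the work.
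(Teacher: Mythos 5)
Your proof is correct and follows essentially the same route as the paper: identify $A$ as the incidence matrix of the connected oriented graph on $\mathcal{S}$, deduce $\ker L=\mathrm{span}\{\mathbf{1}\}$ and $\mathrm{Im}\,L=\big(\mathrm{span}\{\mathbf{1}\}\big)^{\bot}$, check $\Psi\perp\mathbf{1}$ from \eqref{GC:conservation}, and verify $A\big(A^TL^{-1}\Psi\big)=\Psi$. The only (harmless) difference is that where the paper cites a standard graph-theory reference for $\dim\ker L$ equalling the number of connected components, you derive it directly from the fact that elements of $\ker(A^T)$ are constant on connected components.
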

\begin{proof}
We first have $\mathbf{1}^T\,A=0$: $\text{ Im } A\subset \big (\text{span }\{1\}\big )^\bot (\subset \R^{|\mathcal{S}|})$. Let us show that we have equality. 
In order to show this, we notice that  the matrix $A$ is nothing more that the incidence matrix of the oriented graph $\mathcal{G}$ defined by the triangulation $\mathcal{T}$.  It is known \cite{graph} that its null space of $L$  is equal to the number of connected  components of the graph, i.e. here $\dim \ker L=1$. Since 
$$ L\,\mathbf{1}=0,$$ we see that $\ker L=\text{span }\{\mathbf{1}\}$, so that $\text{ Im }L= \big (\text{span }\{\mathbf{1}\}\big )^T$ because $L$ is symmetric. We can define the inverse of $L$ on $\text{Im }L$,  denoted by $L^{-1}$. 

Let $x\in \big (\text{span }\{\mathbf{1}\}\big )^\bot=\text{ Im }L$. There exists $y\in \R^{|S|}$ such that $x=Ly=A(A^Ty)$: this shows that $x\in \text{ Im }A$ and thus
$\text{ Im }A=\big (\text{span }\{\mathbf{1}\}\big )^\bot=\big (\text{Im }L\big )^\bot$.  From this we deduce that $\text{rank }A=|\mathcal{S}|-1$ because $\text{ Im } A\subset \R^{|\mathcal{S}|}$.

Let $\Psi\in \R^{|\mathcal{S}|}$ be such that $\langle \mathbf{1},\Psi\rangle=0$. We know there exists a unique $z\in \big (\text{span }\{\mathbf{1}\}\big )^\bot$ such that $Lz=\Psi$, i.e.
$$A(A^Tz)=\Psi.$$
This shows that a solution is given by \eqref{eq:lemma}.
\end{proof}

This set of flux are consistent and we can estimate the normals $\bn_{\sigma,\sigma'}$.
In the case of a constant state, we have $\Phi_\sigma=0$ for all $\sigma\in K$. Let us assume that
\begin{equation}
\label{GC:consistency}
\hbbf_\sigma^b=\bbf(\bu^h)\cdot \mathbf{N}_\sigma
\end{equation}
with $\sum\limits_{\sigma\in K} \mathbf{N}_\sigma=0$: this is the case for all the examples we consider. The flux $\bbf(\bu^h)$ has components on the canonical basis of $\R^d$:
$\bbf(\bu^h)=\big (f_1(\bu^h), \ldots , f_d(\bu^h)\big )$, so that
$$\hbbf_\sigma^b=\sum\limits_{i=1}^d f_i(\bu^h)\mathbf{N}^i_\sigma.$$
%If $\lambda_2, \ldots , \lambda_{\#K}$ are the non zero eigenvalues of $L$ and $(\mathbf{e}_2, \ldots , \mathbf{e}_{\#K})$ the corresponding eigenvectors, we have for any $\bx$,
%$$L(\bx)=\sum_{l=2}^{\#K} \lambda_l ( \bx\cdot \mathbf{e}_j) \mathbf{e}_j$$ so that
%$$L^{-1}(\bx)=\sum_{l=2}^{\#K}\frac{1}{\lambda_i}( \bx\cdot e_j) e_j.$$
Applying this to $\big (\hbbf_{\sigma_1}^b, \ldots, \hbbf_{\sigma_{\#K}}^b\big )$, we see that the $j$-th component of $\bn_{\sigma,\sigma'}$ for $[\sigma,\sigma']$ direct, must satisfy:
$$\text{ for any }\sigma\in K, \; \mathbf{N}^j_\sigma=\sum\limits_{[\sigma,\sigma']\text{ edge }}\varepsilon_{\sigma,\sigma'}\bn_{\sigma,\sigma'}^j$$
i.e.
$$\big ( \mathbf{N}^j_{\sigma_1}, \ldots , \mathbf{N}^j_{\sigma_{\#K}}\big )^T= A \; \big ( \bn_{\sigma,\sigma'}^j\big )_{[\sigma,\sigma']\in \mathcal{E}^+}.$$
We can solve the system and the solution, with some abuse of language, is
\begin{equation}
\label{GC:normals}
\big ( \bn_{\sigma,\sigma'}\big )_{[\sigma,\sigma']\in \mathcal{E}^+}=A^TL^{-1} \big ( \mathbf{N}_{\sigma_1}, \ldots , \mathbf{N}_{\sigma_{\#K}}\big )^T
\end{equation}
This also defines the control volumes since we know their normals. We can state:
\begin{proposition}
If the residuals $(\Phi_\sigma)_{\sigma\in K}$ and the boundary fluxes $(\hbbf_\sigma^b)_{\sigma\in K}$ satisfy \eqref{GC:conservation}, and if the boundary fluxes satisfy the consistency relations \eqref{GC:consistency}, then we can find  a set of consistent flux $(\hbbf_{\sigma,\sigma'})_{[\sigma,\sigma']} $ satisfying \eqref{GC:1}. They are given by \eqref{eq:lemma}. In addition, for a constant state,
$$\hbbf_{\sigma,\sigma'}(\bu^h)=\bbf(\bu^h)\cdot\bn_{\sigma,\sigma'}$$ for the normals defined by \eqref{GC:normals}.
\end{proposition}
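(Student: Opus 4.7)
The plan is to reduce the proposition to Lemma~\ref{lemma:flux} applied to an appropriately shifted right-hand side, and then verify the consistency claim by linearity in the flux argument.

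First, I set $\Psi_\sigma := \Phi_\sigma - \hbbf_\sigma^b$. The hypothesis \eqref{GC:conservation} gives $\sum_{\sigma \in K}\Psi_\sigma = 0$, so $\Psi \in \bigl(\mathrm{span}\{\mathbf{1}\}\bigr)^\bot = \mathrm{Im}(L)$. Lemma~\ref{lemma:flux} then applies and produces a vector of edge-fluxes $\bigl(\hbbf_{\{\sigma,\sigma'\}}\bigr)_{\{\sigma,\sigma'\}\in\mathcal{E}^+}$ given by \eqref{eq:lemma}. By construction this vector solves \eqref{GC:1.1bis}, and unwinding the definition of $\varepsilon_{\sigma,\sigma'}$ together with the antisymmetry convention \eqref{GC:1.2} shows that this is exactly \eqref{GC:1.1}. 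Existence of flux functions satisfying \eqref{GC:1} is thereby established.

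Second, for the consistency claim I specialize to a constant state $\bu^h\equiv \bu$. Definition~\ref{MD:consistency} forces $\Phi_\sigma = 0$ for every $\sigma \in K$, so \eqref{GC:consistency} yields
$$\Psi_\sigma = -\hbbf_\sigma^b = -\bbf(\bu)\cdot\mathbf{N}_\sigma.$$
Writing $\bbf(\bu) = \bigl(f_1(\bu),\dots,f_d(\bu)\bigr)$ and exploiting the linearity of the map $A^T L^{-1}$, formula \eqref{eq:lemma} decomposes componentwise as
$$\hbbf_{\{\sigma,\sigma'\}}(\bu) \;=\; \sum_{j=1}^d f_j(\bu)\,\Bigl[A^T L^{-1}\bigl(\mathbf{N}^j_{\sigma_1},\ldots,\mathbf{N}^j_{\sigma_{\#K}}\bigr)^T\Bigr]_{\{\sigma,\sigma'\}}.$$
Identifying the bracketed quantity with the $j$-th component of $\bn_{\sigma,\sigma'}$ via \eqref{GC:normals} gives the claimed identity $\hbbf_{\{\sigma,\sigma'\}}(\bu) = \bbf(\bu)\cdot \bn_{\sigma,\sigma'}$ (up to the overall sign already absorbed in the definition of $\mathbf{N}_\sigma$).

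The main obstacle is not analytic but combinatorial: one must be careful with the orientation bookkeeping, checking that the signs in $\varepsilon_{\sigma,\sigma'}$, the antisymmetry \eqref{GC:1.2}, and the sum condition $\sum_{\sigma\in K}\mathbf{N}_\sigma = 0$ used for \eqref{GC:consistency} are all mutually compatible. This compatibility is precisely what guarantees that the vector $\bigl(\mathbf{N}^j_{\sigma_1},\ldots,\mathbf{N}^j_{\sigma_{\#K}}\bigr)$ lies in $\mathrm{Im}(L)$, so that \eqref{GC:normals} is well-defined, and that inverting $A$ against it reproduces, edge by edge, the scaled normals of the associated control volumes. Once this bookkeeping is in place, the proof reduces to invoking Lemma~\ref{lemma:flux} twice, once with $\Psi_\sigma$ and once with $-\bbf(\bu)\cdot\mathbf{N}_\sigma$, and matching the outputs.
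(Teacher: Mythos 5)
Your argument is correct and follows the paper's own route: the existence part is exactly the invocation of Lemma~\ref{lemma:flux} applied to $\Psi_\sigma=\Phi_\sigma-\hbbf_\sigma^b$ (which lies in $(\mathrm{span}\{\mathbf{1}\})^\bot$ by \eqref{GC:conservation}), and the consistency part is the same componentwise linearity argument the paper uses to derive \eqref{GC:normals} from the constant-state reduction $\Phi_\sigma=0$. Your parenthetical remark about the overall sign being absorbed into $\mathbf{N}_\sigma$ is the right reading of the paper's convention (cf.\ $\mathbf{N}_\sigma=-\bn_\sigma/2$ in the $\PP^1$ example), so nothing is missing.
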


\bigskip

We can state a couple of general remarks:
\begin{remark}
\begin{enumerate}
\item The flux $\hbbf_{\sigma,\sigma'}$ depend on the $\Psi_\sigma$ and not directly on the $\hbbf_\sigma^b$. We can design the fluxes independently of the boundary flux, and their consistency directly comes from the consistency of the boundary fluxes.
\item 
The residuals depends on more than 2 arguments. For stabilized finite element methods, or the non linear stable residual distribution
 schemes, see e.g.  \cite{Hughes1,struijs,abgrallLarat}, the residuals depend on all the states on  $K$. Thus
the formula \eqref{eq:lemma} shows that the flux depends on more than two states in contrast to the  1D case. In the finite volume case however, the support of the flux function is generally larger than the three states of $K$, think for example of an ENO/WENO method, or a simpler MUSCL one.
\item The formula \eqref{eq:lemma} are influenced by the form of the total residual \eqref{fv:tot:residu}.  We show in the next paragraph how this can be generalized.
%\item We have set at the beginning that $\hbbf_{{\sigma,\sigma'}}=-\hbbf_{{\sigma',\sigma}}$. The formula \eqref{eq:lemma} are antisymmetric with respect to the indices, and then do respect the assumed equality.
\item The formula \eqref{eq:lemma} make no assumption on the approximation space $V^h$: they are valid for continuous and discontinuous approximations. The structure of the approximation space appears only in the total residual.
\end{enumerate}

\end{remark}

\subsubsection{Some particular cases: fully explicit formula}\label{particular}
Let $K$ be a fixed triangle. We are given a set of residues $\{\Phi_\sigma^K\}_{\sigma\in K}$, our aim here is to define a
 flux function such that relations similar to \eqref{fv:res:sigma} hold true. We explicitly give the formula 
for $\PP^1$ and $\PP^2$ interpolant.

\paragraph{The general  $\PP^1$ case.}
The adjacent matrix is 
$$A=\left (\begin{array}{rrr} 1&0&-1\\
-1&1&0\\
0&-1&1
\end{array}\right ).
$$
A straightforward calculation shows that the matrix $L=A^TA$ has eigenvalues $0$ and $3$ with multiplicity 2 with eigenvectors
$$R=\begin{pmatrix}
\frac{1}{\sqrt{3}} & \frac{1}{\sqrt{2}} & \frac{1}{\sqrt{6}}\\
\frac{1}{\sqrt{3}} & \frac{-1}{\sqrt{2}} & \frac{1}{\sqrt{6}}\\
\frac{1}{\sqrt{3}} & 0                         & \frac{-2}{\sqrt{6}}
\end{pmatrix}
$$
To solve $A\hbbf=\Psi$, we decompose $\Psi$ on the eigenbasis:
$$\Psi=\alpha_2 R_2+\alpha_3R_3$$ where explicitly
$$\begin{array}{l}
\alpha_2=\frac{1}{\sqrt{2}} \big (\Psi_1-2\Psi_2+\Psi_3\big )\\
\\
\alpha_3=\sqrt{\frac{3}{2}}\big (\Psi_1-\Psi_3\big )
\end{array}
$$
so that 
$$\hbbf=\frac{1}{3}\begin{pmatrix}\Psi_1-\Psi_3 \\ \Psi_2-\Psi_3 \\ \Psi_3-\Psi_2 \end{pmatrix}.$$

In order to describe the control volumes, we first have to make precise the normals $\bn_\sigma$ in that case. It is easy to see that in all the cases described above, we have 
$$\normal_\sigma=-\frac{\bn_\sigma}{2}.$$ Then a short calculation shows that
$$\begin{pmatrix}
\bn_{12} \\ \bn_{23} \\ \bn_{31} \end{pmatrix}=
\frac{1}{6}\begin{pmatrix} \bn_1-\bn_2 \\ \bn_2 -\bn_3 \\ \bn_3 -\bn_1 \end{pmatrix}.
$$
Using elementary geometry of the triangle, we see that these  are the normals of the elements of the dual mesh. For example, the normal $\bn_{12}$ is the normal of 
 $PG$, see figure \ref{fig:fv}.
 
 Relying more on the geometrical interpretation (once we know the control volumes), we can recover the same formula by elementary calculations, see \cite{icm}.
%%%%%%%%%%%%%

\paragraph{The general example of the $\PP^2$ approximation. }
Using a similar method, we get (see figure \ref{fig:P2} for some notations):
$$
\begin{array}{lcl}
\hbbf_{14}&=&\dfrac{1}{12}\big (\Psi_1-\Psi_4\big )+\dfrac{1}{36}\big ( \Psi_6-\Psi_5\big )+\dfrac {7}{36}\big (\Psi_1- \Psi_2\big )+\dfrac {5 }{36}\big (\Psi_3-\Psi_1\big )\\
&\\
\hbbf_{16}&=&\dfrac{1}{12}\big ( \Psi_4-\Psi_1\big )+\dfrac {5}{36}\big ( \Psi_5-\Psi_1)
+\dfrac {7}{36}\big ( \Psi_6-\Psi_1\big ) +\dfrac{1}{36}\big ( \Psi_3- \Psi_2\big )\\
&\\
\hbbf_{46}&=&\dfrac{2}{9}\big (\Psi_2-\Psi_6\big )+\dfrac{1}{9}\big (  \Psi_3- \Psi_5\big )\\
&\\
\hbbf_{54}&=&\dfrac{2}{9}\big (  \Psi_5-\Psi_2\big )+\dfrac{1}{9}\big ( \Psi_5-\Psi_1\big )\\
\end{array}
$$
$$
\begin{array}{lcl}
%&\\
\hbbf_{42}&=&\dfrac {7}{36}\big (\Psi_2-\Psi_3\big ) +\dfrac{5}{36}\big (\Psi_1-\Psi_3\big )+\dfrac{1}{12}\big(\Psi_6-\Psi_3\big )+\dfrac{1}{36}\big (\Psi_5-\Psi_4\big ) \\

&\\
\hbbf_{25}&=&\dfrac{1}{36}\big (\Psi_2-\Psi_1\big )+\dfrac{5}{36}\big (\Psi_3-\Psi_5\big ) +\dfrac{7}{36}\big ( \Psi_3-\Psi_5\big )+\dfrac{1}{12}\big (\Psi_3-\Psi_6\big )  \\

&\\
\hbbf_{53}&=&\dfrac{1}{36}\big (\Psi_1-\Psi_6\big )+\dfrac{5}{36}\big (\Psi_3-\Psi_5\big )+\dfrac{7}{36}\big (\Psi_4-\Psi_5\big )+\dfrac{1}{12}\big (\Psi_2-\Psi_5\big )
\\
&\\
\hbbf_{63}&=& \dfrac{1}{36}\big (\Psi_4-\Psi_3\big )+\dfrac{5}{36}\big (\Psi_5-\Psi_1\big )+\dfrac{7}{36}\big (\Psi_5-\Psi_6\big )+\dfrac{1}{12}\big (\Psi_5-\Psi_2\big )\\
&\\

\hbbf_{65}&=&\dfrac{1}{9}\big (\Psi_1- \Psi_3\big )+\dfrac{2}{9}\big ( \Psi_6- \Psi_4\big )\end {array}
$$
Then we choose the boundary flux:
$$\hbbf_\sigma^b=\int_{\partial K}\varphi_\sigma\bn\; d\gamma$$ and get:
$$
\begin{array}{lll}
\normal_l=-\dfrac{\bn_l}{6} & \text{if }l=1,2,3\\ &&\\
\normal_4=\dfrac{\bn_3}{3}& \normal_5=\dfrac{\bn_1}{3}& \normal_6=\dfrac{\bn_2}{3}
\end{array}
$$
The normals are given by:
$$
\begin{array}{lcl}
\bn_{14}&=&\dfrac{1}{12}\big (\normal_1-\normal_4\big )+\dfrac{1}{36}\big ( \normal_6-\normal_5\big )+\dfrac {7}{36}\big (\normal_1- \normal_2\big )+\dfrac {5 }{36}\big (\normal_3-\normal_1\big )\\
&\\
\bn_{16}&=&\dfrac{1}{12}\big ( \normal_4-\normal_1\big )+\dfrac {5}{36}\big ( \normal_5-\normal_1)
+\dfrac {7}{36}\big ( \normal_6-\normal_1\big ) +\dfrac{1}{36}\big ( \normal_3- \normal_2\big )\\
&\\

\bn_{46}&=&\dfrac{2}{9}\big (\normal_2-\normal_6\big )+\dfrac{1}{9}\big (  \normal_3- \normal_5\big )\\
&\\
\bn_{54}&=&\dfrac{2}{9}\big (  \normal_5-\normal_2\big )+\dfrac{1}{9}\big ( \normal_5-\normal_1\big )
\end{array}
$$
$$
\begin{array}{lcl}
%&\\
\bn_{42}&=&\dfrac {7}{36}\big (\normal_2-\normal_3\big ) +\dfrac{5}{36}\big (\normal_1-\normal_3\big )+\dfrac{1}{12}\big(\normal_6-\normal_3\big )+\dfrac{1}{36}\big (\normal_5-\normal_4\big ) \\

&\\
\bn_{25}&=&\dfrac{1}{36}\big (\normal_2-\normal_1\big )+\dfrac{5}{36}\big (\normal_3-\normal_5\big ) +\dfrac{7}{36}\big ( \normal_3-\normal_5\big )+\dfrac{1}{12}\big (\normal_3-\normal_6\big )  \\

&\\
\bn_{53}&=&\dfrac{1}{36}\big (\normal_1-\normal_6\big )+\dfrac{5}{36}\big (\normal_3-\normal_5\big )+\dfrac{7}{36}\big (\normal_4-\normal_5\big )+\dfrac{1}{12}\big (\normal_2-\normal_5\big )
\\
&\\
\bn_{63}&=& \dfrac{1}{36}\big (\normal_4-\normal_3\big )+\dfrac{5}{36}\big (\normal_5-\normal_1\big )+\dfrac{7}{36}\big (\normal_5-\normal_6\big )+\dfrac{1}{12}\big (\normal_5-\normal_2\big )\\
&\\

\bn_{65}&=&\dfrac{1}{9}\big (\normal_1- \normal_3\big )+\dfrac{2}{9}\big ( \normal_6- \normal_4\big )\end {array}
$$

\bigskip

There is not uniqueness, and it is possible to construct different solutions to the problem. In what follows, we show another possible construction. 
We consider the set-up defined by Figure \ref{fig:P2}.
\begin{figure}[h!]
\begin{center}
\includegraphics[width=0.45\textwidth]{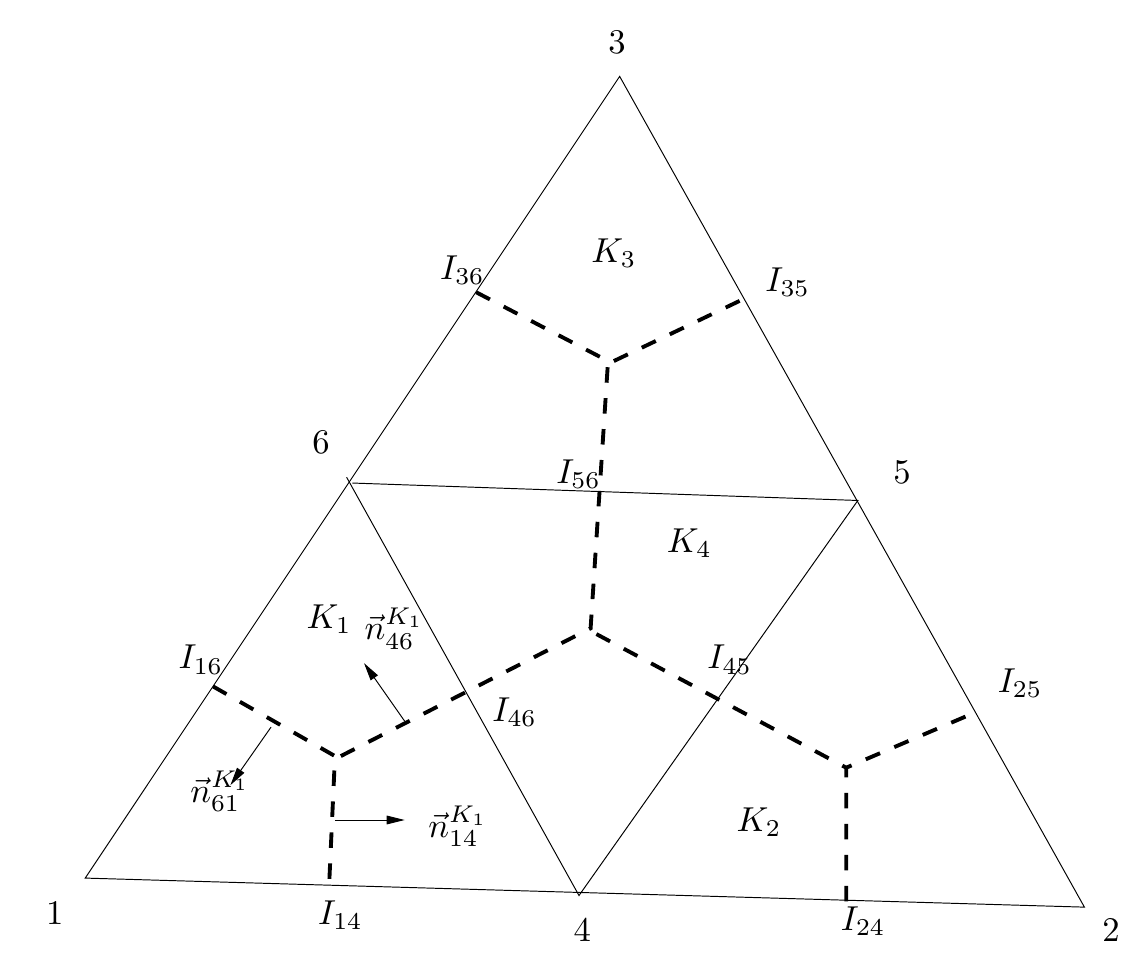}
\end{center}
\caption{\label{fig:P2} Geometrical elements for the $\PP^2$ case. $I_{ij}$ is the mid-point between the vertices $i$ and $j$. The intersections of the dotted lines are the centroids of the sub-elements.}
\end{figure}
The triangle is split first into 4 sub-triangles $K_1$, $K_2$, $K_3$ and $K_4$. From this sub-triangulation,
 we can construct a dual mesh as in the $\PP^1$ case and we have represented the  6 sub-zones
 that are the intersection of the dual control volumes and the triangle $K$.  Our notations are as follow: given any sub-triangle $K_\xi$,
 if $\gamma_{ij}$ is intersection between two adjacent control volumes (associated to $\sigma_i$ and $\sigma_j$ vertices of $K_\xi$), the normal
 to $\gamma_{ij}$ in the direction $\sigma_i$ to $\sigma_j$ is denoted by $\bn_{ij}^{\xi}$. Similarly the flux across $\gamma_{ij}$ is denoted 
$\hbbf_{ij}^\xi$.

Then we need to define boundary fluxes.  If $\sigma$ belongs to $K_l$, we denote the boundary flux as $\hbbf_\sigma^{K_l}$. A rather  natural condition is that
$$\begin{array}{ll}
\hbbf_l^{K_l}=\hbbf_l^b & l=1,2,3\\
\hbbf_4^{K_l}=\frac{1}{3}\hbbf_4^b & l=1,2,4\\
\hbbf_5^{K_l}=\frac{1}{3}\hbbf_5^b & l=2,3,4\\
\hbbf_6^{K_l}=\frac{1}{3}\hbbf_6^b & l=1,3,4.\\
\end{array}
$$
We recover the conservation relation. Other choices are possible since this one is arbitrary: the only true condition is that the sum of the boundary flux is equal to the sum  of the $\hbbf_j^b$ for $j=1, \ldots, 6$: this is the conservation relation.

Then we set:
\begin{equation}
\label{rd:fv:P2}
\begin{array}{lll} 
\Phi_1&= -\hbbf_{\bn_{61}}^1+\hbbf_{\bn_{14}}^1&+\hbbf_1^b\\
\Phi_2&=-\hbbf_{\bn_{42}}^2+\hbbf_{\bn_{25}}^2&+\hbbf_2^b\\
\Phi_3&=-\hbbf_{\bn_{53}}^3+\hbbf_{\bn_{36}}^3&+\hbbf_3^b\\
\Phi_4&=-\hbbf_{\bn_{14}}^1+\big (\hbbf_{\bn_{46}}^1-\hbbf_{\bn_{64}}^4\big )+\big (\hbbf_{\bn_{45}}^4-\hbbf_{\bn_{54}}^2\big )+\hbbf_{\bn_{42}}^2&+\hbbf_4^b\\
\Phi_5&= -\hbbf_{\bn_{25}}^2+\big ( \hbbf_{\bn_{54}}^2-\hbbf_{\bn_{45}}^4\big ) + \big ( \hbbf_{\bn_{56}}^4-\hbbf_{\bn_{65}}^3\big ) + \hbbf_{\bn_{53}}^3&+\hbbf_5^b\\
\Phi_6&=-\hbbf_{\bn_{36}}^3+\big (\hbbf_{\bn_{65}}^3-\hbbf_{\bn_{56}}^4\big )+\big (\hbbf_{\bn_{64}}^4-\hbbf_{\bn_{46}}^1\big )+\hbbf_{\bn_{61}}^1&+\hbbf_6^b
\end{array}
\end{equation}
We can group the terms in \eqref{rd:fv:P2}  by sub-triangles, namely:
\begin{equation}
\label{rd:fv:P2:2}
\begin{array}{lclclcl}
\Phi_1&=& \big (-\hbbf_{\bn_{61}}^1+\hbbf_{\bn_{14}}^1+\hbbf_1^b \big )& \\
\Phi_2&=&\big (-\hbbf_{\bn_{42}}^2+\hbbf_{\bn_{25}}^2+\hbbf_2^b \big )&\\
\Phi_3&=&\big (-\hbbf_{\bn_{53}}^3+\hbbf_{\bn_{36}}^3+\hbbf_3^b\big )&\\
\Phi_4&=&\big (-\hbbf_{\bn_{14}}^1+\hbbf_{\bn_{46}}^1+\hbbf_4^{K_1}\big ) &+&\big ( -\hbbf_{\bn_{64}}^4+\hbbf_{\bn_{45}}^4+\hbbf_1^{K_4} \big )\\
&&&+&\big (-\hbbf_{\bn_{54}}^2+\hbbf_{\bn_{42}}^2+\hbbf_4^{K_2} \big )\\
\Phi_5&=&\big (-\hbbf_{\bn_{25}}^2+\hbbf_{\bn_{54}}^2+\hbbf_5^{K_2} \big)&
+&\big(-\hbbf_{\bn_{45}}^4+\hbbf_{\bn_{56}}^4+\hbbf_5^{K_4}\big )\\
&&& +&\big (-\hbbf_{\bn_{65}}^3+\hbbf_{\bn_{53}}^3+\hbbf_5^{K_3}\big )\\
\Phi_6&=&\big (-\hbbf_{\bn_{36}}^3+\hbbf_{\bn_{65}}^3+\hbbf_6^{K_3} \big )& 
+&\big (-\hbbf_{\bn_{56}}^4+\hbbf_{\bn_{64}}^4+\hbbf_6^{K_4} \big )\\
&&&+&\big ( -\hbbf_{\bn_{46}}^1+\hbbf_{\bn_{61}}^1+\hbbf_6^{K_1}\big ).
\end{array}
\end{equation}

Then we define the sub-residuals per sub elements:
\begin{equation}
\label{rd:fv:P2:3}
\begin{split}
\Phi_1^1=-\hbbf_{\bn_{61}}^1+\hbbf_{\bn_{14}}^1+\hbbf_1^{b_{\phantom{1}}}  &,\qquad\Phi_4^2=-\hbbf_{\bn_{54}}^2+\hbbf_{\bn_{42}}^2+\hbbf_4^{K_2}\\
\Phi_4^1=-\hbbf_{\bn_{14}}^1+\hbbf_{\bn_{46}}^1+\hbbf_4^{K_1} &,\qquad\Phi_2^2=-\hbbf_{\bn_{42}}^2+\hbbf_{\bn_{25}}^2+\hbbf_2^{K_2}\\
\Phi_6^1=-\hbbf_{\bn_{46}}^1+\hbbf_{\bn_{61}}^1+\hbbf_6^{K_1}  &,\qquad\Phi_5^2=-\hbbf_{\bn_{25}}^2+\hbbf_{\bn_{54}}^2+\hbbf_5^{K_2}\\
&\\
\Phi_5^3=-\hbbf_{\bn_{65}}^3+\hbbf_{\bn_{53}}^3+\hbbf_5^{K_3}  &,\qquad\Phi_4^4=-\hbbf_{\bn_{64}}^4+\hbbf_{\bn_{45}}^4+\hbbf_4^{K_4}\\
\Phi_3^3=-\hbbf_{\bn_{36}}^3+\hbbf_{\bn_{65}}^3+\hbbf_3^{K_3}  &,\qquad\Phi_5^4=-\hbbf_{\bn_{45}}^4+\hbbf_{\bn_{56}}^4+\hbbf_5^{K_4}\\
\Phi_6^3=-\hbbf_{\bn_{36}}^3+\hbbf_{\bn_{65}}^3+\hbbf_6^{K_3}  &,\qquad\Phi_6^4=-\hbbf_{\bn_{56}}^4+\hbbf_{\bn_{64}}^4+\hbbf_6^{K_4},
\end{split}
\end{equation}
so we are back to the $\PP^1$ case: in each sub-triangle, we can define flux that will depend on the 6 states of the element via the boundary flux.
This is legitimate because in the $\PP^1$ case,  we have not used the fact that the interpolation is linear, we have only used the fact that we have 3 vertices. Clearly the fluxes are consistent in the sense of definition \ref{MD:consistency}.

The same argument can be clearly extended to higher degree element, as well as to non triangular element: what is needed is to subdivide the element into 
sub-triangles. %Similarly, this can be extended to discontinuous elements in the spirit of remark \ref{remark:general}.

The two solutions we have presented for the $\PP^2$ case are different: the control volumes are different, since they have more sides in the second case than in the first one.

\subsubsection{More specific examples}
In what follow we look at the flux form on specific numerical schemes: an extension of the Rusanov scheme, what is called the N scheme after P.L. Roe and a discontinuous Galerkin method.

\medskip
\paragraph{Rusanov residual.}
Here we assume a global continuous approximation.
Assuming that the total residual is evaluated using the Lagrange interpolation of the flux, $\bbf^h=\sum\limits_{\sigma'\in K} \bbf(\bu_\sigma)\varphi_\sigma$, 
we define (the integrals can be evaluated exactly in that case)
\begin{equation}
\label{rusanov}
\Phi_\sigma(\bu^h)=\int_{\partial K}\varphi_\sigma\bbf^h\cdot \bn\; d\gamma-\int_K\nabla \varphi_\sigma \cdot \bbf^h\; d\bx 
+\alpha(\bu_\sigma-\bar\bu), \qquad \bar\bu=\dfrac{\sum\limits_{\sigma'\in K} \bu_{\sigma'}}{\#K}
\end{equation} where $\#K$ is the number of degrees of freedom in $K$ and  $\alpha$ is a parameter that will become explicit later. 

Since  $0=\int_K \varphi_\sigma \; \text{ div }(1)\; d\bx=-\int_K \nabla \varphi_\sigma \; d\bx+\int_{\partial K} \varphi_\sigma \bn \; d\gamma$ and $\sum\limits_{\sigma'\in K}\varphi_{\sigma'}=1$,  we have
\begin{equation*}
\begin{split}
\Phi_\sigma(\bu^h)&=\sum\limits_{\sigma'\in K}\bbf(\bu_{\sigma'})\cdot\bigg ( -\int_K\varphi_{\sigma'} \;\nabla \varphi_\sigma \; d\bx+\int_{\partial K} \varphi_\sigma\varphi_{\sigma'}\bn\; d\gamma \bigg )+\alpha(\bu_\sigma-\bar\bu)\\
&=\sum\limits_{\sigma'\in K}\big ( \bbf(\bu_{\sigma'})-\bbf(\bu_\sigma)\big )\cdot\bigg ( -\int_K\varphi_{\sigma'} \;\nabla \varphi_\sigma \; d\bx+\int_{\partial K} \varphi_\sigma\varphi_{\sigma'}\bn\; d\gamma \bigg )+\alpha(\bu_\sigma-\bar\bu)\\
&=\sum\limits_{\sigma'\in K}\bigg ( \big ( \bbf(\bu_{\sigma'})-\bbf(\bu_\sigma)\big )\cdot\bigg ( -\int_K\varphi_{\sigma'} \;\nabla \varphi_\sigma \; d\bx+\int_{\partial K} \varphi_\sigma\varphi_{\sigma'}\bn\; d\gamma \bigg )+\frac{\alpha}{\#K} \big (\bbu_\sigma-\bbu_{\sigma'}\big ) \bigg )\\
& =\sum\limits_{\sigma'\in K} c_{\sigma\sigma'} (\bbu_\sigma-\bbu_{\sigma'})
\end{split}
\end{equation*}
with
$$c_{\sigma\sigma'}= -\dfrac{\bbf(\bu_{\sigma})-\bbf(\bu_{\sigma'})}{\bu_{\sigma}-\bu_{\sigma'}}\cdot \bigg ( -\int_K\varphi_{\sigma'} \;\nabla \varphi_\sigma \; d\bx+\int_{\partial K} \varphi_\sigma\varphi_{\sigma'}\bn\; d\gamma \bigg ) +\dfrac{\alpha}{\#K}.$$

A local maximum principle is obtained if for any element, and any couple of degrees of freedom in that element,  we have $c_{\sigma \sigma'} \geq 0$.
In the present case, we take
$$\alpha\geq \#K \; \max\limits_{\sigma, \sigma'\in K} \bigg |\dfrac{\bbf(\bu_{\sigma})-\bbf(\bu_{\sigma'})}{\bu_{\sigma}-\bu_{\sigma'}}\cdot \bigg ( -\int_K\varphi_{\sigma'} \;\nabla \varphi_\sigma \; d\bx+\int_{\partial K} \varphi_\sigma\varphi_{\sigma'}\bn\; d\gamma \bigg ) \bigg |.$$

In the case of triangular elements with $\PP^1$ approximation, we have
$$\hbbf_{\sigma\sigma'}=\frac{1}{2}\big ( \int_K\nabla\big (\varphi_\sigma-\varphi_{\sigma'}\big )\cdot \Bf^h\; d\gamma\big ) +\alpha (\bu_\sigma-\bu_{\sigma'}).$$
Using simple geometry (see figure \ref{fig:fv}-a), we get
\begin{equation}
\label{flux:lxf}
\hbbf_{\sigma\sigma'}=\frac{1}{|K|}\big (\int_K\Bf^h\; d\bx\big )\cdot \bn_{\sigma\sigma'}+\alpha (\bu_\sigma-\bu_{\sigma'}).
\end{equation}

We see that this flux is not exactly the classical Rusanov flux
$$\hbbf^{Rus}_{\sigma\sigma'} =\frac{1}{2}\big ( \Bf_\sigma+\Bf_{\sigma'})\cdot \bn_{\sigma\sigma'}+\alpha (\bu_\sigma-\bu_{\sigma'}),$$
but is formally very close to it: it is the sum of a centered part (the surface integral) and a dissipation. We also note that the flux \eqref{flux:lxf} is not necessarily monotone, but 
it is monotone combined with the flux  $\hbbf_\bn^b$.

%\begin{remark}\label{beta}
%The coefficients $\beta_\sigma$ introduced in the relations \eqref{schema RDS SUPG} and \eqref{schema RDS jump} are defined by:
%\begin{equation}\label{eqbeta}
%\beta_\sigma=\dfrac{\max(0,\frac{\Phi_\sigma}{\Phi})}
%{
%\sum\limits_{\sigma'\in K} \max(0,\frac{\Phi_{\sigma'}}{\Phi})}.
%\end{equation} These coefficients are always defined and   guaranty a local maximum principle for \eqref{schema RDS SUPG} and \eqref{schema RDS jump}: this is again a consequence of the conservation properties,  see e.g. \cite{abgrallLarat}.
%\end{remark}
\paragraph{The N scheme.}
Considering the problem \eqref{eq1} with triangular elements. We assume  the existence of an average vector $\overline{\nabla_\bu\Bf}$ such that
$$\frac{1}{2}\sum_\sigma \Bf_\sigma\cdot \bn_\sigma=|K| \; \overline{\nabla_\bu\Bf}\cdot \nabla \bu^h.$$
Here, again both $\Bf$ and $\bu$ are approximated by a linear Lagrange interpolant. 

A simple example of such situation is given by the Burgers problem where $\Bf(\bu)=(\frac{\bu^2}{2}, \bu)^T$. Here
$$\overline{\nabla_\bu\Bf}= \big ( \bar\bu, 1)^T$$
where $\overline{\bu}$ is the arithmetic average of the nodal values. This average is a generalisation of the Roe average \cite{roe1981}, a version for the Euler equations  can be found in \cite{DeconinckRoeStruijs}.

Using this average, the N scheme, see \cite{RoeSidilkover}, can be defined as follows:
\begin{subequations}
\label{Nscheme}
\begin{equation}\label{Nscheme:1}
\Phi_\sigma= k_\sigma^+\big ( \bu_\sigma-\tilde{\bu}\big )
\end{equation}
with
\begin{equation}
\label{Nscheme:2}
k_\sigma=\dfrac{1}{|K|}\int_K \overline{\nabla_\bu\Bf}\cdot\nabla \varphi_\sigma\; d\bx, \qquad k_\sigma^+=\max(k_\sigma,0), k_\sigma^-=\min(k_\sigma,0)
\end{equation}
and 
\begin{equation}
\label{Nscheme:3}
\tilde{\bu}=N\bigg (\sum_{\sigma'\in K} k_{\sigma'}^-\bu_{\sigma'} \bigg ),  \qquad N^{-1}=\sum_{\sigma'\in K} k_{\sigma'}^-
\end{equation}
The value of $N$ is chosen such that the conservation \eqref{conservation:K} holds true.
\end{subequations}
When looking at the flux $\hbbf_{\bn_{\sigma,\sigma'}}$, no particular nice looking structure appears, except in the case of a thin triangle, where the associated flux is a generalisation of Roe's flux.

%In \cite{abgrall-barth} it is shown that the N scheme is also consitent with 
Note that Remark \ref{beta} also applies here, provided that the Rusanov residuals are replaced by those of the N scheme in the definition of $\beta_\sigma$, see \eqref{eqbeta}.

\paragraph{Discontinuous Galerkin schemes ($\PP^1$ case).}
The residual is simply 
$$\Phi_\sigma^K=\oint_{\partial K}\varphi_\sigma\hbbf_\bn(\bu^h,\bu^{h,-})\; d\gamma-\oint_K \nabla\varphi_\sigma\cdot \bbf(\bu^h)\; d\bx.$$

In the $\PP^1$ case, the flux between two DOFs $\sigma$ and $\sigma'$ is given by
$$\hbbf_{\sigma,\sigma'}(\bu^h,\bu^{h,-})=\oint_{\partial K}(\varphi_\sigma-\varphi_{\sigma'})\hbbf_\bn(\bu^h,\bu^{h,-})\; d\gamma-\oint_K \nabla\big (\varphi_\sigma-\varphi_{\sigma'}\big )\cdot \bbf(\bu^h)\; d\bx.$$
Again, from simple geometry, $$\nabla\big (\varphi_\sigma-\varphi_{\sigma'}\big )=-\frac{\bn_{\sigma\sigma'}}{|K|},$$ so that
\begin{equation*}
%\label{flux:DG}
\hbbf_{\sigma,\sigma'}(\bu^h,\bu^{h,-})=\oint_{\partial K}(\varphi_\sigma-\varphi_{\sigma'})\hbbf_\bn(\bu^h,\bu^{h,-})\; d\gamma+\frac{\oint_K  \bbf(\bu^h)\; d\bx}{|K|}\cdot\bn_{\sigma\sigma'}.
\end{equation*}
Note that $\oint_{\partial K}(\varphi_\sigma-\varphi_{\sigma'})\; d\gamma=0$ if we take the same quadrature formula on each edge, as it is usually done. Hence, denoting by $\bar \bu$ the cell average of $\bu^h$ on $K$, we can rewrite the flux as 
\begin{equation}
\label{flux:DG}
\hbbf_{\sigma,\sigma'}(\bu^h,\bu^{h,-})=\frac{\oint_K  \bbf(\bu^h)\; d\bx}{|K|}\cdot\bn_{\sigma\sigma'}+ \oint_{\partial K}(\varphi_\sigma-\varphi_{\sigma'})\big ( \hbbf_\bn(\bu^h,\bu^{h,-})-\bbf(\bar\bu)\cdot\bn\big )\; d\gamma
\end{equation}
so that the second term can be interpreted as a dissipation. The control volume is depicted in figure  \ref{control:DG}.
\begin{figure}[h]
\begin{center}
\includegraphics[width=0.5\textwidth]{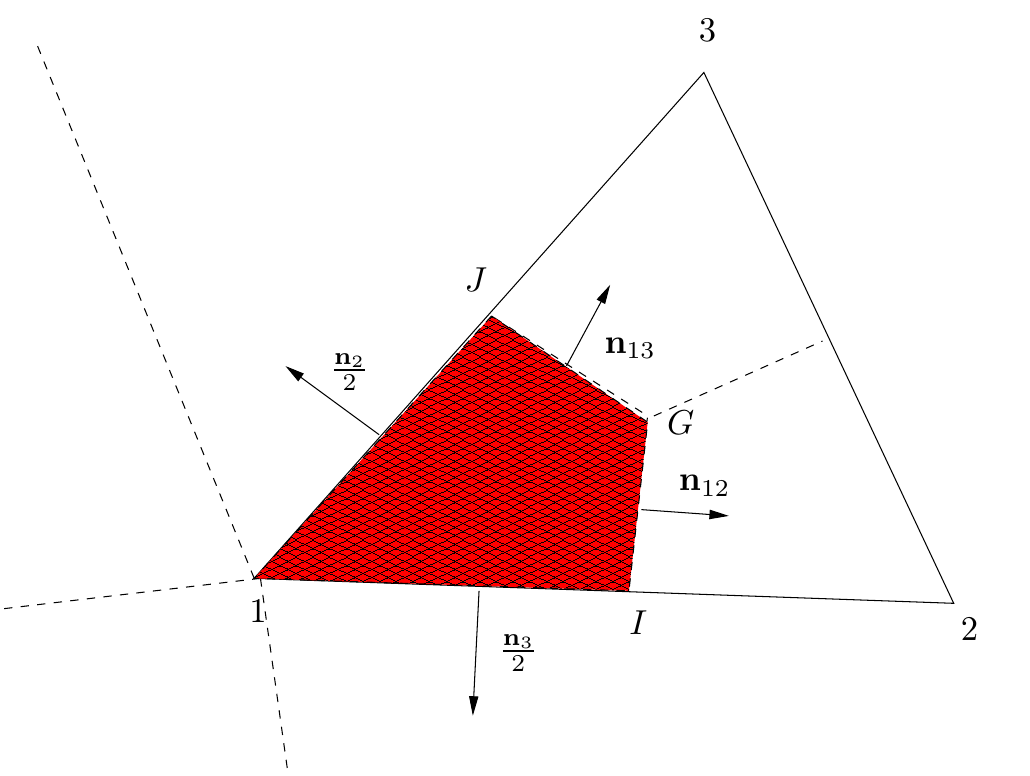}
\end{center}
\caption{\label{control:DG} Representation of the control volume associated to DOF $1$.}
\end{figure}
Referring to figure \ref{control:DG} for the DOF $\# 1$, the flux on the faces $I1J$ is $\oint_{\partial K}\varphi_1 \hbbf_\bn(\bu^h,\bu^{h,-})\; d\gamma$. In order to respect some geometrical assignment, the flux on $1I$ is set to
$$\oint_{1I}\varphi_1 \hbbf_\bn(\bu^h,\bu^{h,-})\; d\gamma$$
and on $1J$,
$$\oint_{1J}\varphi_1 \hbbf_\bn(\bu^h,\bu^{h,-})\; d\gamma.$$

\section{Entropy dissipation}\label{sec:entropy}
In this section, we consider the system version of \eqref{eq1}. Our results on the flux are similar, since we never have used we were dealing with residual belonging to $\R$ or to some $\R^p$.
\subsection{The 1 D case revisited}
We start by recalling Tadmor's work \cite{TadmorVieux,TadmorActa}.
Let us start from a finite  volume scheme semi-discretized in time:
$$\Delta x \dfrac{d\mathbf{v}_i}{dt}+\hbbf_{i+1/2}-\hbbf_{i-1/2}=0.$$
If $\mathbf{v}$ is the entropy variable, we have:
$$\Delta x \langle \mathbf{v}_i,\dfrac{d\mathbf{u}_i}{dt}\rangle+\langle \mathbf{v}_i,\hbbf_{i+1/2}\rangle-\langle \mathbf{v}_i,\hbbf_{i-1/2}\rangle=0$$
Then
$$\langle \mathbf{v}_i,\hbbf_{i+1/2}\rangle=\langle \frac{\mathbf{v}_i+\mathbf{v}_{i+1}}{2}, \hbbf_{i+1/2}\rangle + \langle \frac{\mathbf{v}_i-\mathbf{v}_{i+1}}{2}, \hbbf_{i+1/2}\rangle$$
Following Tadmor, we introduce the  potential:
$$\theta=\langle \mathbf{v}, \bbf\rangle -\mathbf{g} $$ where $\bbg$ is the entropy flux, so that the entropy flux is defined by:
$$\hat{\bbg}_{i+1/2}:=\langle \frac{\mathbf{v}_i+\mathbf{v}_{i+1}}{2}, \hbbf_{i+1/2}\rangle-\frac{\theta_i+\theta_{i+1}}{2}$$
and we get
$$\langle \mathbf{v}_i,\hbbf_{i+1/2}\rangle= \hat{\bbg}_{i+1/2}+ \langle \frac{\mathbf{v}_i-\mathbf{v}_{i+1}}{2}, \hbbf_{i+1/2}\rangle-\frac{\theta_i+\theta_{i+1}}{2}$$
$$\langle \mathbf{v}_i,\hbbf_{i-1/2}\rangle= \hat{\bbg}_{i-1/2}+  \langle \frac{\mathbf{v}_i-\mathbf{v}_{i-1}}{2}, \hbbf_{i-1/2}\rangle-\frac{\theta_i+\theta_{i-1}}{2}.$$
Thus,
$$\Delta x \langle \mathbf{v}_i,\dfrac{d\mathbf{u}_i}{dt}\rangle+\hat{\bbg}_{i+1/2} -\hat{\bbg}_{i-1/2}=
\bigg ( \langle \frac{\mathbf{v}_{i+1}-\mathbf{v}_{i}}{2}, \hbbf_{i+1/2}\rangle-\frac{\theta_{i+1}-\theta_i}{2}\bigg ) +\bigg (  \langle \frac{\mathbf{v}_{i}-\mathbf{v}_{i-1}}{2}, \hbbf_{i-1/2}\rangle-\frac{\theta_{i}-\theta_{i-1}}{2}\bigg ).$$
This leads to the definition of entropy stable schemes:
\begin{definition}[Tadmor \cite{TadmorVieux,TadmorActa}]
A scheme is entropy dissipative if for any $j$, 
$$\langle \frac{\mathbf{v}_{j+1}-\mathbf{v}_{j}}{2}, \hbbf_{j+1/2}\rangle-\frac{\theta_{j+1}-\theta_j}{2}\leq 0$$
and entropy stable if we have an equality.
\end{definition}

In residue form,  we have
$$\Delta x \dfrac{d\mathbf{u}_i}{dt}+\phi_{i}^{i+1/2}+\phi_{i}^{i-1/2}=0$$
with
$$\phi_{i}^{i+1/2}=\hbbf_{i+1/2}-\bbf_i, \qquad \phi_{i}^{i-1/2}=\bbf_i-\hbbf_{i-1/2}$$ so that for any $j$
$$\phi_{j}^{j+1/2}=\hbbf_{j+1/2}-\bbf_j, \qquad \phi_{j+1}^{j+1/2}=\bbf_{j+1}-\hbbf_{j+1/2}$$
If we compute 
$\langle \mathbf{v}_j, \phi_{j}^{j+1/2}\rangle+\langle \mathbf{v}_{j+1}, \phi_{j+1}^{j+1/2}\rangle$ (note  this term is the one formulated in proposition \ref{th:entropy}),
we get, using $\theta_j+\bbg_j=\langle \mathbf{v}_j, \bbf_j\rangle$
\begin{equation*}
\begin{split}
\langle \mathbf{v}_j, \phi_{j}^{j+1/2}\rangle&+\langle \mathbf{v}_{j+1}, \phi_{j+1}^{j+1/2}\rangle=\langle \mathbf{v}_j,\hbbf_{j+1/2}-\bbf_j\rangle +
 \langle \mathbf{v}_{j+1},\bbf_{j+1}-\hbbf_{j+1/2}\rangle \\
&=\langle \mathbf{v}_j-\mathbf{v}_{j+1}, \hbbf_{j+1/2}\rangle - \langle \mathbf{v}_j,\bbf_j\rangle+\langle \mathbf{v}_{j+1},\bbf_{j+1}\rangle\\
&=\bigg ( \langle \mathbf{v}_j-\mathbf{v}_{j+1}, \hbbf_{j+1/2}\rangle- \theta_j+\theta_{j+1}\bigg ) +\bbg_{j+1}-\bbg_j.
\end{split}
\end{equation*}
So the condition 
$$\langle \mathbf{v}_j, \phi_{j}^{j+1/2}\rangle+\langle \mathbf{v}_{j+1}, \phi_{j+1}^{j+1/2}\rangle\geq \bbg_{j+1}-\bbg_j$$
is equivalent to Tadmor's condition 
$$ \langle \frac{\mathbf{v}_{i+1}-\mathbf{v}_{i}}{2}, \hbbf_{i+1/2}\rangle-\frac{\theta_{i+1}-\theta_i}{2}\leq 0.$$
This suggests natural generalisation to the multidimensional case, i.e. the relation \eqref{entropy:1}.
%%%%%%%%%%%%%%%%%%%%%%%%%%
\subsection{The multidimensional case}\label{sec:multiD}
%%%%%%%%%%%%%%%%%%%%%%%%%%
Let us recall the entropy condition \eqref{entropy:1},
$$\sum\limits_{\sigma\in K} \langle \mathbf{v}_\sigma, \Phi_\sigma\rangle \geq \oint_{\partial K}\hbbg_\bn(\mathbf{u}^h,\mathbf{u}^{h,-}) \; d\gamma.
$$
Written like this, it seems that the residuals and the consistent entropy flux can be chosen independently, which is not exactly the case.

From the previous analysis, we have
$$\Phi_\sigma=\sum\limits_{[\sigma,\sigma']}\hbbf_{\sigma\sigma'}+\hbbf_\sigma^b$$
with the condition \eqref{GC:conservation}.
This suggests to choose
$$\hbbf_\sigma^b=\oint_{\partial K}\varphi_\sigma \hbbf_\bn(\mathbf{u}^h,\mathbf{u}^{h,-})\; d\gamma,$$
because \eqref{entropy:1} becomes:
\begin{equation*}
\begin{split}
 \sum\limits_{\sigma\in K} \langle \mathbf{v}_\sigma, \Phi_\sigma\rangle= \oint_{\partial K} \langle\mathbf{v}^h ,\hbbf_\bn(\mathbf{u}^h,\mathbf{u}^{h,-})\rangle\; d\gamma +\sum\limits_{\sigma \in K}\sum\limits_{\sigma>\sigma'} \langle \mathbf{v}_\sigma-\mathbf{v}_{\sigma'},\hbbf_{\sigma\sigma'}\rangle
\geq \oint_{\partial K}\hbbg_\bn(\mathbf{u}^h,\mathbf{u}^{h,-}) \; d\gamma
\end{split}
\end{equation*}
Here we have set $\mathbf{v}^h=\sum\limits_{\sigma\in K} \mathbf{v}_\sigma \varphi_\sigma$. 

We introduce the potential $\theta^h$ in $K$ by 
\begin{equation}\label{potential}
\theta^h:=\sum\limits_{\sigma\in K} \theta_\sigma \varphi_\sigma \text{ with }
\theta_\sigma=\langle \mathbf{v}_\sigma,\bbf(\mathbf{v}_\sigma)\rangle-\bbg(\mathbf{v}_\sigma).
\end{equation}
Then we  define $\hbbg_{\bn}$ by
\begin{equation}
\label{entropy:flux}
\hbbg_{\bn}(\mathbf{u}^h,\mathbf{u}^{h,-})=\langle \{ \mathbf{v}^h\}, \hbbf_\bn(\mathbf{u}^h,\mathbf{u}^{h,-})\rangle-\{\theta^h\}\cdot \bn.
\end{equation}

The numerical flux is defined only on $\partial K$ and $\{a\}$ is the arithmetic average of the left and right states of $a$ on the boundary of $\partial K$.
The condition \eqref{entropy:1} becomes
\begin{equation}
\label{entropy:1:bis}
\sum\limits_{\sigma>\sigma'} \langle \mathbf{v}_\sigma-\mathbf{v}_{\sigma'},\hbbf_{\sigma\sigma'}\rangle+\oint_{\partial K}\theta^h_K\cdot \bn \; d\gamma -\frac{1}{2}\bigg (\oint_{\partial K} \langle [\mathbf{v}^h],\hbbf_\bn(\mathbf{v}^h,\mathbf{v}^{h,-}\rangle \;d\gamma-\oint_{\partial K} [\theta]\cdot \bn\; d\gamma\bigg )\geq 0.
\end{equation}
Here, the jump definition is consistent with Tadmor's definition in the one dimensional case: for any function $w$,
\begin{equation}
\label{jump}
[w]=w_{|K^-}-w_{|K}.
\end{equation}
From this we see that a sufficient condition for local entropy stability is that:
\begin{subequations}
\label{entropy:stable}
\begin{enumerate}
\item In $K$, we have
\begin{equation}\label{entropy:stable:1:bis}
\sum_{\sigma\in K}\langle \mathbf{v}_\sigma,\Psi_\sigma\rangle +\oint_{\partial K}\theta^h_K\cdot \bn \; d\gamma \geq 0,
\end{equation}
where $\Psi_\sigma=\Phi_\sigma-\hbbf_\sigma^b$,
or equivalently
\begin{equation}
\label{entropy:stable:1}
\sum\limits_{\sigma\in K}\sum\limits_{\sigma>\sigma'} \langle \mathbf{v}_\sigma-\mathbf{v}_{\sigma'},\hbbf_{\sigma\sigma'}\rangle+\oint_{\partial K}\theta^h_K\cdot \bn \; d\gamma \geq 0.
\end{equation}

\item On the boundary of $K$ we ask that the numerical flux $\hbbf$ is entropy stable so that  \begin{equation}
\label{entropy:stable:2}
\oint_{\partial K}\bigg ( \langle [\mathbf{v}^h],\hbbf_\bn(\mathbf{v}^h,\mathbf{v}^{h,-})\rangle -[\theta]\cdot \bn\bigg )\; d\gamma
\leq 0.
\end{equation}
Note that this condition is automatically met for a continuous $\mathbf{u}^h$.
\end{enumerate}
\end{subequations}
The condition \eqref{entropy:stable:2} is automatically met is the flux $\hbbf$ is entropy stable in the sense of Tadmor:
\begin{equation}
\label{entropy:stable:3}
\langle [\mathbf{v}^h],\hbbf_\bn(\mathbf{v}^h,\mathbf{v}^{h,-})\rangle-[\theta^h]\cdot \bn\leq 0.
\end{equation}

%Assuming we have an entropy stable scheme, how can \eqref{entropy:stable:1} be met without loosing accuracy.
Note that these conditions do not make any assumptions on the quadrature formulas on the boundary of $K$ or in $K$. This is in contrast with the conditions on SAT-SBP schemes \cite{Gassner,Zingg,ShuEntropy,Mishra}.
\begin{remark}
Starting from a consistent flux $\hbbf$, a simple way to construct a numerical flux $\hbbf'$ that satisfies \eqref{entropy:stable:3} is to consider:
$$\hbbf'_\bn(\mathbf{v}^h,\mathbf{v}^{h,-})=\hbbf_\bn(\mathbf{v}^h,\mathbf{v}^{h,-})+\alpha\big (\mathbf{v}^h-\mathbf{v}^{h,-}\big )$$
with $\alpha$ chosen so that \eqref{entropy:stable:3} holds true. If the original flux is Lipschitz continuous, this is always possible.

Hence the satisfaction of \eqref{entropy:stable:3} is not an issue. Note this does not spoil the accuracy conditions \eqref{eq: residual accuracy}. Given a numerical flux, it is always possible to construct residuals that satisfies the conservation relation with that given flux. In the appendix, we show how to proceed for discontinuous representations. The next paragraph shows how to enforce a local entropy condition, in general.
\end{remark}
%%%%%%%%%%%%%%%%%
%\begin{remark}
We can rework the relation \eqref{entropy:stable:1} in order to show some links with the recent paper \cite{ShuEntropy}.
Using the flux definitions, we can rewrite 
$$\sum\limits_\sigma\langle\mathbf{v}_\sigma,\Psi_\sigma\rangle+\oint_K\theta_\bn d\gamma$$ as
\begin{equation*}
\begin{split}
\frac{1}{2}\sum\limits_{\sigma>\sigma'} \big ( \langle\mathbf{v}_\sigma-\mathbf{v}_{\sigma'}, \hbbf_{\sigma,\sigma'}\rangle-\big (\theta_\sigma-\theta_{\sigma'}\big )\cdot \bn_{\sigma,\sigma'}\big )+\frac{1}{2}\sum\limits_{\sigma> \sigma'}\big (\theta_\sigma-\theta_{\sigma'}\big )\cdot \bn_{\sigma,\sigma'}
\end{split}
\end{equation*}
Then, we see that
$$\frac{1}{2}\sum\limits_{\sigma>\sigma'}\big (\theta_\sigma-\theta_{\sigma'}\big )\cdot \bn_{\sigma\sigma'}=\sum\limits_{\sigma\in \partial K} \theta_\sigma\cdot \normal_\sigma\; d\gamma=\oint_{\partial K} \theta^h_K\; d\gamma.$$
This relation is the motivation for defining $\theta^h$ in \eqref{potential}. 
Thanks to this, we can write
the condition as:
$$
{\frac{1}{2}\sum\limits_{\sigma >\sigma'} \big ( \langle\mathbf{v}_\sigma-\mathbf{v}_{\sigma'},
 \hbbf_{\sigma,\sigma'}\rangle-\big (\theta_\sigma-\theta_{\sigma'}\big )\cdot \bn_{\sigma\sigma'}\big )}+
{\oint_{\partial K} \theta(\mathbf{v}^h)\cdot\bn\; d\gamma-\sum\limits_{\sigma\in \partial K} \theta_\sigma\cdot \normal_\sigma}\geq 0.$$%+ \bar\alpha \mathcal{D}(\bv_h)\geq 0.$$
with
$$\frac{1}{2}\sum\limits_{\sigma> \sigma'} \big ( \langle\mathbf{v}_\sigma-\mathbf{v}_{\sigma'},
 \hbbf_{\sigma,\sigma'}\rangle-\big (\theta_\sigma-\theta_{\sigma'}\big )\cdot \bn_{\sigma\sigma'}\big )\geq 0.$$%+ \bar\alpha\mathcal{D}(\bv_h)\geq 0.$$
 We see that, as in \cite{ShuEntropy}, if the fluxes  $\hbbf_{\sigma,\sigma'}$ are entropy stable, we get entropy stability at the element level.

%\end{remark}

%\input{Numeric}
%\input{unsteady}
\section{Conclusion}
This paper shows some links between now classical schemes, such as the finite volume scheme, the continuous finite element methods, the discontinuous Galerkin methods and more generally a class of method nicknamed as Residual Distribution (RD) methods. We show that, under a proper definition of a consistent flux, all these schemes enjoy a flux formulation, and hence are locally conservative. This is well known for most schemes, less  known for some of them. The fluxes are explicitly given. 
We also show that Tadmor's entropy stability condition can  be reformulated very simply in the Residual Distribution context. Using this we have shown some connections with the recent work \cite{ShuEntropy}. However the discussion here is certainly not finished, it will be the topic of another paper.

The emphasis of this paper is put on the steady case, but the unsteady state is similar, see \cite{Mario} and \cite{Abgrall2017}. 
\section*{Acknowledgements}
The author has been funded in part  by the SNSF project 200021\_153604 "High fidelity simulation for compressible materials". I would also like to thanks Anne Burbeau 
(CEA-DEN) for her critical reading of the first draft of this paper. Her input has hopefully helped to improve the readability of this paper. The two referees and the editor are also warmly thanked for their patience, their comments and ability to trace typos. The remaining mistakes are mine. 
\bibliographystyle{unsrt}
\bibliography{papier}

\begin{thebibliography}{10}

\bibitem{ShuEntropy}
T.~Chen and C.-W. Shu.
\newblock {Entropy stable high order discontinuous {Galerkin} methods with
  suitable quadrature rules for hyperbolic conservation laws}.
\newblock {\em J. Comput. Phys.}, 345:427 -- 461, 2017.

\bibitem{Mishra}
A.~Hiltebrand and S.~Mishra.
\newblock {Entropy stable shock capturing space–time discontinuous {Galerkin}
  schemes for systems of conservation laws}.
\newblock {\em Numer. Math.}, 126:103--151, 2014.

\bibitem{Gassner}
G.J. Gassner.
\newblock A skew-symmetric discontinuous {Galerkin} spectral element
  discretisation and its relation to {SBP-SAT} finite difference methods.
\newblock {\em SIAM J. Sci. Comput.}, 35:A1233--A1253, 2013.

\bibitem{Zingg}
J.E. Hicke, D.C.~Del Rey, and D.W. Zingg.
\newblock Multidimensional summation-by-part operators: general theory and
  application to simplex elements.
\newblock {\em SIAM J. Sci. Comput.}, 38:A1935--A1958, 2016.

\bibitem{icm}
R.~Abgrall.
\newblock On a class of high order schemes for hyperbolic problems.
\newblock In {\em Proceedings of the International Conference of
  Mathematicians, volume {IV}}, pages 699--726, Seoul, 2014.

\bibitem{Abgrall99}
R.~Abgrall.
\newblock {Toward the ultimate conservative scheme: Following the quest.}
\newblock {\em J. Comput. Phys.}, 167(2):277--315, 2001.

\bibitem{energie}
R.~{Abgrall}.
\newblock {Essentially non-oscillatory residual distribution schemes for
  hyperbolic problems.}
\newblock {\em {J. Comput. Phys.}}, 214(2):773--808, 2006.

\bibitem{AbgrallRoe}
R.~Abgrall and P.~L. Roe.
\newblock {High-order fluctuation schemes on triangular meshes.}
\newblock {\em J. Sci. Comput.}, 19(1-3):3--36, 2003.

\bibitem{abgrall:dgrds}
R.~Abgrall.
\newblock A residual method using discontinuous elements for the computation of
  possibly non smooth flows.
\newblock {\em Adv. Appl. Math. Mech}, 2010.

\bibitem{abgrall:shu}
R.~Abgrall and C.W. Shu.
\newblock {Development of residual distribution schemes for discontinuous
  Galerkin methods}.
\newblock {\em Commun. Comput. Phys.}, 5:376--390, 2009.

\bibitem{abgrallLarat}
R.~Abgrall, A.~Larat, and M.~Ricchiuto.
\newblock {Construction of very high order residual distribution schemes for
  steady inviscid flow problems on hybrid unstructured meshes.}
\newblock {\em J. Comput. Phys.}, 230(11):4103--4136, 2011.

\bibitem{Mario}
M.~Ricchiuto and R.~Abgrall.
\newblock {Explicit Runge-Kutta residual distribution schemes for time
  dependent problems: second order case.}
\newblock {\em J. Comput. Phys.}, 229(16):5653--5691, 2010.

\bibitem{abgralldeSantisSISC}
R.~Abgrall and D.~de~Santis.
\newblock High-order preserving residual distribution schemes for
  advection-diffusion scalar problems on arbitrary grids.
\newblock {\em SIAM J. Sci. Comput.}, 36(3):A955--A983, 2014.
\newblock also \url{http://hal.inria.fr/docs/00/76/11/59/PDF/8157.pdf}.

\bibitem{AbgralldeSantisNS}
R.~Abgrall and D.~de~Santis.
\newblock Linear and non-linear high order accurate residual distribution
  schemes for the discretization of the steady compressible {Navier-Stokes}
  equations.
\newblock {\em J. Comput. Phys.}, 283:329--359, 2015.

\bibitem{Abgrall2017}
R.~Abgrall.
\newblock High order schemes for hyperbolic problems using globally continuous
  approximation and avoiding mass matrices.
\newblock {\em Journal of Scientific Computing}, 73:461--494, 2017.

\bibitem{struijs}
R.~Struijs, H.~Deconinck, and P.L. Roe.
\newblock Fluctuation splitting schemes for the 2{D} {E}uler equations.
\newblock VKI-LS 1991-01, 1991.
\newblock Computational Fluid Dynamics.

\bibitem{TadmorVieux}
E.~Tadmor.
\newblock The numerical viscosity of entropy stable schemes for systems of
  conservation laws, {I}.
\newblock {\em Mathematics of Computation}, 49:91--103, 1987.

\bibitem{TadmorActa}
E.Tadmor.
\newblock Entropy stability theory for difference approximations of nonlinear
  conservation laws and related time-dependent problems.
\newblock {\em Acta Numerica}, 13:451--512, 2003.

\bibitem{ciarlet}
P.~Ciarlet.
\newblock {\em The finite element method for elliptic problems}.
\newblock North-Holland, Amsterdam, 1978.

\bibitem{ErnGuermond}
A.~Ern and J.L. Guermond.
\newblock {\em Theory and practice of finite elements}, volume 159 of {\em
  Applied Mathematical Sciences}.
\newblock Springer verlag, 2004.

\bibitem{Hughes1}
T.J.R. Hughes, L.P. Franca, and M.~Mallet.
\newblock A new finite element formulation for {CFD}: {I}. symmetric forms of
  the compressible {E}uler and {N}avier-{S}tokes equations and the second law
  of thermodynamics.
\newblock {\em Comp. Meth. Appl. Mech. Engrg.}, 54:223--234, 1986.

\bibitem{burman}
E.~Burman and P.~Hansbo.
\newblock Edge stabilization for {G}alerkin approximation of
  convection-diffusion-reaction problems.
\newblock {\em Comput. Methods Appl. Mech. Engrg}, 193:1437--1453, 2004.

\bibitem{kroner}
D.~Kr\"oner, M.~Rokyta, and M.~Wierse.
\newblock A {Lax-Wendroff} type theorem for upwind finite volume schemes in
  $2$-d.
\newblock {\em East-West J. Numer. math.}, 4(4):279--292, 1996.

\bibitem{BurQS:10}
E.~Burman, A.~Quarteroni, and B.~Stamm.
\newblock Interior penalty continuous and discontinuous finite element
  approximations of hyperbolic equations.
\newblock {\em J. Sci. Comput.}, 43(3):293--312, 2010.

\bibitem{graph}
F.~R.K. Chung.
\newblock {\em Spectral Graph Theory}, volume~92 of {\em CBMS Regional
  Conference Series in Mathematics}.
\newblock American Mathematical Society, 1997.

\bibitem{roe1981}
P.L. {Roe}.
\newblock {Approximate Riemann solvers, parameter vectors, and difference
  schemes.}
\newblock {\em {J. Comput. Phys.}}, 43:357--372, 1981.

\bibitem{DeconinckRoeStruijs}
H.~Deconinck, P.L. Roe, and R.~Struijs.
\newblock A multidimensional generalization of {Roe's} flux difference splitter
  for the euler equations.
\newblock {\em Computers and Fluids}, 22(2-3):215--222, May 1993.

\bibitem{RoeSidilkover}
P.L. {Roe} and D.~{Sidilkover}.
\newblock {Optimum positive linear schemes for advection in two and three
  dimensions.}
\newblock {\em {SIAM J. Numer. Anal.}}, 29(6):1542--1568, 1992.

\end{thebibliography}
\appendix
\section{A DG RDS scheme}\label{DG_RDS}
Let us consider problem \eqref{eq1} defined on 
$\Omega\subset \R^2$. In this case, the approximation can be discontinuous across edges: $\bu^h\in \mathcal{V}_h$.

In a first step, we consider a conformal triangulation of  $\Omega$ using triangles. This is not essential but simplifies a bit the text.
 The 3D case can be dealt with in a similar way.

In  $K$, we say that the degrees of freedom are located at the  vertices, and we represent the approximated solution in 
 $K$ by the degree one interpolant polynomial at the vertices of $K$. 
Let us denote by  $\bu^h$ this piecewise linear approximation, that is in principle discontinuous at across edges. 
In the following, we use the notations described in Figure
 \ref{DGRDS:fig1}.
\begin{figure}[h]
\begin{center}
\includegraphics[width=0.5\textwidth]{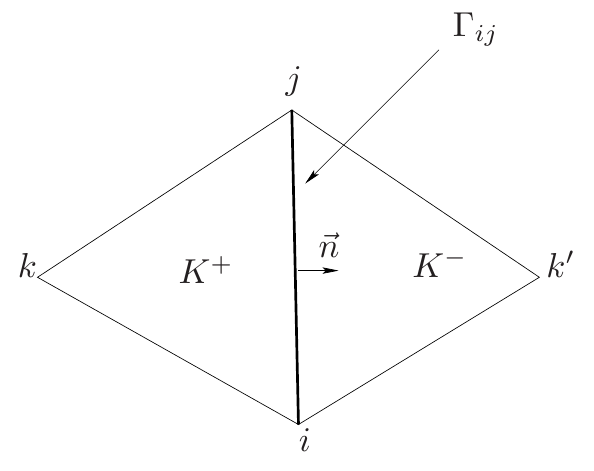}
\end{center}
\caption{\label{DGRDS:fig1} Geometrical elements for defining the scheme.}
\end{figure}

In  \cite{abgrall:shu}, the degrees of freedom are located at the midpoint of the edges that connect the centroid of $K$
 and its vertices. This choice was motivated by  the fact that the 
$\PP^1$ basis functions associated to these nodes are orthogonal in  $L^2(K)$. This property enables us to reinterpret
the DG schemes as RD schemes, and hence to adapt the stabilization techniques of RD to DG. In particular, we are able to
enforce a $L^\infty$ stability property. However, this method was a bit complex, and it is not straightforward to
 generalize it to more general elements than triangles.

The geometrical idea behind the  version  that we describe now  is to forget the RD interpretation of the DG scheme and
 to let the
geometrical localization of the degrees of freedom  move to the vertices of the element.

With this in mind, we define two types of total  residuals:
\begin{itemize}
\item A total residual per element $K$
$$
\Phi^K(\bu^h)=\oint_{\partial K} \bbf(\bu^h)\cdot \bn \; d\gamma.
$$
\item A total residual per edge $\Gamma$, i.e.
$$\Phi_{\Gamma}(\bu^h)=\oint_{\Gamma} \big [ \bbf(\bu)\cdot{\bn} \big ] \; d\gamma
$$
where $[\bbf(\bu)\cdot{\bn}]$ represents the jump of  the function $\bbf(\bu)\cdot \bn$ across  $\Gamma$. Here, if 
$\bn$ is the outward unit normal to $K$ (see figure \ref{DGRDS:fig1}), 
which enables us to define a right side and a left side. Hence we set
$$[\bbf(\bu)\cdot\bn]= ( \bbf(\bu_R)-\bbf(\bu_L) )\cdot \bn.$$
We  notice that  $\Phi_\Gamma$ only depends on the values of  $\bbu$ on each side of $\Gamma$.
\end{itemize}

The idea is to split the total residuals into sub-residuals so that a monotonicity preserving scheme can be defined.
Here, we choose the Rusanov scheme, but other choices could be possible.
Thus we consider
\begin{subequations}\label{DGRDS:LxF}
\begin{itemize}
\item For the element  $K$ and any vertex $\sigma\in K$,
\begin{equation}\label{DGRDS:LxFK}\Phi_{\sigma}^K= \dfrac{\Phi^K}{3} +{\alpha_K} ( \bbu_\sigma -\overline{\bbu})\end{equation} 
with 
$$\overline{\bbu}=\frac{1}{3}\sum\limits_{\sigma'\in K} u_{\sigma'}, $$
and $\alpha_K\geq \max\limits_{\bx\in K} ||\nabla \bbf(\bbu^h(\bx))|| $ where $||~.~|| $ is any norm in $\R^2$, for example the Euclidean norm.
\item and for the edge $\Gamma$, any $\sigma\in \Gamma$,
\begin{equation}\label{DGRDS:LxFGamma}\Phi_\sigma^\Gamma(\bu^h)= \dfrac{\Phi^\Gamma(\bu^h)}{4} +\alpha_\Gamma ( \bbu_{\sigma }-\overline{\bu})\end{equation}
with
$$\overline{\bu}=\frac{1}{4}\sum\limits_{\sigma'\in K^+\cup K^-} \bu_{\sigma'}$$
 where  and $\alpha_\Gamma\geq \max\limits_{K=K^+,K^-}\max\limits_{\bx\in \partial  K\cap \Gamma} ||\nabla \bbf(\bu^h(\bx))||$, see Figure \ref{DGRDS:fig1} for a definition of $K^\pm$.
\end{itemize}
\end{subequations}
We have the following conservation relations
\begin{equation}
\label{DGRDS:conservation}
\begin{split}
\sum\limits_{\sigma \in K} \Phi_{\sigma}^K(\bu^h)&=\Phi^K(\bu^h),\\
\sum\limits_{\sigma \in \Gamma} \Phi_{\sigma}^\Gamma(\bu^h)&=\Phi^\Gamma(\bu^h)
\end{split}
\end{equation}
The choice $\alpha_K\geq \max\limits_{\bx\in K} ||\nabla \bbf(\bu^h(\bx))||$ and $\alpha_\Gamma\geq \max\limits_{K=K^+,K^-}\max\limits_{\bx\in \partial  K\cup \Gamma} ||\nabla_\bu\bbf(\bu^h(\bx))||$ are justified by the following standard argument.
If we set  $Q=K$ or $\Gamma$, we can rewrite the two residuals as
$$\Phi_\sigma^Q(\bu^h)= \sum\limits_{\sigma'\in Q} c_{\sigma\sigma'}^Q (\bu_{\sigma}-\bu_{\sigma'})$$ with $c_{\sigma \sigma'}^Q\geq 0$ under the above mentioned conditions.
Indeed, using  $\bu^h- {\bu_\sigma}=\sum\limits_{\sigma'\in K} (\bu_\sigma-{\bu_\sigma})\varphi_{\sigma'}$, we get (for $Q=K$ for example)
\begin{equation*}
\begin{split}
\Phi_\sigma^K(\bu^h)& = \dfrac{\Phi^K(\bu^h)}{3} +\alpha_K ( \bu_\sigma-\overline{\bu})\\
&\qquad = \dfrac{1}{3} \oint_{\partial K} \big ( \bbf(\bu^h)-\bbf( \bu_\sigma)\big )\cdot{\bn}  \; d\gamma+\alpha_K ( \bu_\sigma -\overline{\bu}) \\
& \qquad =\sum\limits_{\sigma'\in K} \frac{1}{3} \Bigg [\oint_{\partial K}\bigg (\int_0^1 \nabla \bbf(s \bu^h+(1-s) \bu_\sigma)\varphi_{\sigma'}(\bx)\;ds\bigg )  \cdot{\bn} \; d\gamma  -\alpha_K\Bigg ] (\bu_{\sigma}-\bu_{\sigma'})
\end{split}
\end{equation*}
which proves the result.

Using standard arguments, as defining  $u^h$ as the limit of the solution of 
\begin{equation}\label{iterative}
\bu_{\sigma}^{n+1}=\bu_{\sigma}^n-\omega_{\sigma} \Bigg ( \sum\limits_{K, \sigma\in K} \Phi_{\sigma}^K(\bu^{h,n}) + \sum\limits_{\Gamma, \sigma\in \Gamma}\Phi_{\sigma}^\Gamma(\bu^{h,n})\Bigg  )
\end{equation}
with
$$
\omega_{\sigma} \bigg ( \sum\limits_{K, \sigma\in K} c_{\sigma\sigma'}^K  +\sum\limits_{\Gamma, \sigma'\in \Gamma} c_{\sigma\sigma'}^\Gamma \bigg ) \leq 1,
$$
we see that we have a maximum principle.

It is possible to construct a scheme that is formally second order accurate by setting 
\begin{equation}
\label{DGRDS:O2}
\Phi_{\sigma}^{K,\star}(\bu^h) =\beta_\sigma^K \Phi^K(\bu^h) \text{ and } \Phi_{\sigma}^{\Gamma,\star} (\bu^h)=\beta_\sigma^\Gamma \Phi^K(\bu^h)
\end{equation}
with
$$x_\sigma^K=\dfrac{\Phi_{\sigma}^K(\bu^h)}{\Phi^K(\bu^h)}, \qquad x_\sigma^\Gamma =\dfrac{\Phi_{\sigma}^\Gamma(\bu^h)}{\Phi^\Gamma(\bu^h) },$$
and \begin{equation}
\label{DGRDS:O2.2}
\beta_\sigma^K=\dfrac{ \max(x_\sigma^K,0) }{\sum\limits_{\sigma'\in K} \max(x_{\sigma'}^K,0) }, \qquad
\beta_\sigma^\Gamma=\dfrac{ \max(x_\sigma^\Gamma,0)}{\sum\limits_{\sigma'\in K} \max(x_{\sigma'}^\Gamma,0)}.
\end{equation}

As in the ``classical'' RD framework,  the coefficients $\beta$ are well defined thanks to the conservation
 relations \eqref{conservation:K}. The scheme is
written as  \eqref{DGRDS:residu:1} where the residuals  $\Phi_{\sigma}^{K}(\bu^h)$ (resp. $\Phi_{\sigma}^{\Gamma}(\bu^h)$) are replaced
 by  $\Phi_{\sigma}^{K,\star}(\bu^h)$ (resp. $\Phi_{\sigma}^{\Gamma,\star}(\bu^h)$.

The solution $\bu^h$ is defined:  find  $u^h$ linear in each triangle  $K$
 such that for any  degree of freedom $\sigma$ (i.e. vertex of the triangulation),
\begin{equation}
\label{DGRDS:residu:1}
\sum\limits_{K, \sigma\in K} \Phi_{\sigma}^{K,\star} (\bu^h)+ \sum\limits_{\Gamma, \sigma\in \Gamma}\Phi_{\sigma}^{\Gamma,\star}(\bu^h) =0.
\end{equation}
%We specify later the boundary conditions.
We have a first order approximation just by replacing the "starred" residuals by the first order ones. The system \eqref{DGRDS:residu:1} is solved by an iterative method such as\eqref{iterative}.
\end{document}